\numberwithin{equation}{section}
\newtheorem{thm}{Theorem}[section]
\newtheorem{defi}[thm]{{Definition}}
\newtheorem{cor}[thm]{{Corollary}}
\newtheorem{lem}[thm]{{Lemma}}
\newtheorem{prop}[thm]{Proposition}
\newtheorem{note}[thm]{{Notation}}
\def\C{\mathscr C}
\def\Ac{\mathrm{Aut}_{\C}}
\def\Hom{\mathrm{Hom}}
\def\Homc{\Hom_{\C}}
\def\Id{\mathrm{Id}}
\def\Mor{\mathrm{Mor}}
\def\Obj{\mathrm{Obj}}
\def\lbr{\left(\begin{array}{c}}
\def\lbrt{\left(\begin{array}{cc}}
\def\lbrth{\left(\begin{array}{ccc}}
\def\rbr{\end{array}\right)}
\title[The MCM-approximation of the trivial module over a category algebra]{The MCM-approximation of the trivial module over a category algebra}
\author{Ren Wang}
\keywords{finite EI category, category algebra, Gorenstein-projective modules,  MCM-approximation} \subjclass[2010]{16G10, 16G50, 16D90, 18G25}
\address{School of Mathematical Sciences, University of Science and Technology of China, Hefei, Anhui 230026, P. R. China}
\email{renw@mail.ustc.edu.cn}
\date{\today}
\begin{document}
\begin{abstract}
 For a finite free EI category, we construct an explicit module over its category algebra. If in addition the category is projective over the ground field, the constructed module is Gorenstein-projective and is a maximal Cohen-Macaulay approximation of the trivial module. We give conditions on when the trivial module is Gorenstein-projective.
\end{abstract}
%n--------------------------------
%n--------------------------------

\maketitle
%n--------------------------------
%n--------------------------------

\section{Introduction}

Let $k$ be a field and $\C$ be a finite EI category. Here, the EI condition means
that all endomorphisms in $\C$ are isomorphisms. In particular,
$\Homc(x,x)=\Ac(x)$ is a finite group for each object $x$. Denote by $k\Ac(x)$ the group algebra.
Recall that
a finite EI category $\C$ is \emph{projective over $k$} if each
$k{\rm Aut}_{\C}(y)$-$k{\rm Aut}_{\C}(x)$-bimodule $k{\rm Hom}_{\C}(x,y)$ is projective on both sides; see~\cite[Definition 4.2]{WR}. 

The concept of a finite \emph{free} EI category is introduced in
\cite{LLi,LLi1}. Let $\C$ be a finite free EI category. For any morphism $x\overset{\alpha}{\rightarrow} y$ in $\C$, set 
	$V(\alpha)$ to be the set of objects $w$ such that there are factorizations $x\overset{\alpha'}{\rightarrow}
	w\overset{\alpha''}{\rightarrow}y$ of $\alpha$ with $\alpha''$ a non-isomorphism. 
	For any $w\in V(\alpha)$, we set $t_w(\alpha)=\alpha''\circ(\sum\limits_{g\in \Ac(w)} g)$, which is an element in $k\Homc(w,y)$. The freeness of $\C$ implies that the element $t_w(\alpha)$ is independent of the choice of $\alpha''$. 
	
	Denote by $k$-mod the category of finite dimensional $k$-vector spaces. We identify covariant functors from $\C$ to $k$-mod with left modules over the category algebra. We define a functor $E:\C\longrightarrow k\text{\rm -mod}$ as follows: for each object $x$, $E(x)$ is a $k$-vector space
	with basis $B_x=\bigsqcup\limits_{w\neq x} \Homc(w,x)\cup \{e_x\}$, and
	for each morphism $x\overset{\alpha}{\rightarrow} y$, the $k$-linear map $E(\alpha): E(x)\rightarrow E(y)$ sends $e_x$ to $e_y+\sum\limits_{w\in V(\alpha)} t_w(\alpha)$, and $\gamma$ to $\alpha\circ\gamma$
	for $\gamma\in \Homc(w,x)$ with $w\neq x$; see Proposition~\ref{EF}.

Recall that the \emph{constant functor} $\underline{k}: \C\rightarrow k$-mod is defined as follows: $\underline{k}(x)=k$ for each object $x$, and $\underline{k}(\alpha)=\Id_k$ for each morphism $\alpha$. The constant functor corresponds to the \emph{trivial module} of the category algebra. We mention that the trivial module $\underline{k}$ plays an important role in the cohomological study of categories~\cite{XF1}. It is the tensor identity in the category of modules over the category algebra~\cite{XF2}.

The notion of a \emph{Gorenstein-projective module} is introduced in~\cite{AB}; compare \cite{EJ}. They are generalizations of \emph{maximal Cohen-Macaulay modules} (MCM-modules for short) over Gorenstein rings. Hence, in the literature, Gorenstein-projective modules are often called MCM-modules.

Let $A$ be a finite dimensional algebra over $k$. For an $A$-module $X$, an \emph{{\rm MCM}-approximation} of $X$ is a map $\theta: G\rightarrow X$ with $G$ Gorenstein-projective such that any map $G'\rightarrow X$ with $G'$ Gorenstein-projective factors through $\theta$. The study of such approximations goes back to \cite{BuAs}. In general, the  MCM-approximation seems difficult to construct explicitly.
 
In this paper, we construct an explicit MCM-approximation of the trivial module, provided that the category $\C$ is free and projective over $k$. In this case, we observe that the category algebra is $1$-Gorenstein; see~\cite[Theorem 5.3]{WR}.
  We observe a surjective natural transformation $E\overset{\pi}{\longrightarrow} \underline{k}$ as follows: for each object $x$, $E(x) \overset{\pi_x}{\longrightarrow} \underline{k}(x)=k$ sends $e_x$ to $1_k$, and $\gamma$ to zero for $\gamma\in \Homc(w,x)$ with $w\neq x$.
\begin{thm}\label{MR}
Let $\C$ be a finite free EI category. Assume that the category $\C$ is projective over $k$. Then the map $E\overset{\pi}{\longrightarrow} \underline{k}$ is an MCM-approximation of the trivial module $\underline{k}$.
\end{thm}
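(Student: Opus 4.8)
The plan is to verify the two defining properties of an MCM-approximation: that the source $E$ is Gorenstein-projective, and that every map from a Gorenstein-projective module to $\underline{k}$ factors through $\pi$. Since the category algebra $k\C$ is $1$-Gorenstein under our hypotheses (by \cite[Theorem 5.3]{WR}), the Gorenstein-projective modules are exactly the (first) syzygies of arbitrary modules, equivalently the submodules of projective modules, equivalently the modules $X$ with $\Ext^1_{k\C}(X, k\C)=0$. This characterization will be the main engine of the proof.

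\textbf{Step 1: $E$ is Gorenstein-projective.} First I would analyze the natural transformation $\pi\colon E\to\underline{k}$ and identify its kernel $K$. From the definition of $E$, the subspace of $E(x)$ spanned by $B_x\setminus\{e_x\}=\bigsqcup_{w\neq x}\Homc(w,x)$ is $\pi_x$-annihilated and is visibly a subfunctor, so $K(x)$ has basis $\bigsqcup_{w\neq x}\Homc(w,x)$ with $E(\alpha)$ acting by post-composition $\gamma\mapsto\alpha\circ\gamma$. I claim $K$ is a projective $k\C$-module: it should decompose (or filter) into pieces of the form $k\Homc(w,-)\otimes_{k\Ac(w)}(\text{something})$, and here the projectivity of $\C$ over $k$ — i.e. each $k\Homc(w,y)$ being projective as a $k\Ac(y)$-$k\Ac(w)$-bimodule — is exactly what makes these representable-type functors projective. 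Granting $K$ projective, the short exact sequence $0\to K\to E\to\underline{k}\to 0$ exhibits $E$ as (isomorphic to, up to projectives) a syzygy of $\underline{k}$; since $k\C$ is $1$-Gorenstein, $\Omega\underline{k}$ is Gorenstein-projective, hence so is $E$ provided $E$ has no nonzero projective direct summand splitting off trivially — more cleanly, one checks directly $\Ext^1_{k\C}(E,k\C)=0$ using the long exact sequence from $0\to K\to E\to\underline k\to 0$ together with $\Ext^i_{k\C}(\underline k, k\C)=0$ for $i\geq 2$ (Gorenstein dimension $\leq 1$) and the fact that $\Hom_{k\C}(E,k\C)\to\Hom_{k\C}(K,k\C)$ is surjective because $K$ is projective and the sequence represents the class generating $\Ext^1(\underline k, k\C)$. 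Actually the slick route: $E\cong\Omega\underline{k}$ in the stable category if $E$ has no projective summands, and for any $1$-Gorenstein algebra every syzygy is Gorenstein-projective, so $E$ is Gorenstein-projective.

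\textbf{Step 2: the factorization / approximation property.} Let $G$ be Gorenstein-projective and $f\colon G\to\underline{k}$ any morphism. I want to lift $f$ through $\pi$. Applying $\Hom_{k\C}(G,-)$ to $0\to K\to E\xrightarrow{\pi}\underline{k}\to 0$ gives an exact sequence, and the obstruction to lifting $f$ lives in $\Ext^1_{k\C}(G,K)$. Since $K$ is projective, $\Ext^1_{k\C}(G,K)\cong D\,\overline{\Hom}_{k\C}(\,?\,)$... more directly: $K$ projective over a $1$-Gorenstein algebra need not make $\Ext^1(G,K)$ vanish for general $G$, but for $G$ Gorenstein-projective it does — because $\Ext^i_{k\C}(G,P)=0$ for all $i\geq1$ and all projective $P$, by the very definition of Gorenstein-projectivity via complete projective resolutions. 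Hence $\Ext^1_{k\C}(G,K)=0$, the map $\Hom_{k\C}(G,E)\to\Hom_{k\C}(G,\underline k)$ is surjective, and $f$ factors through $\pi$. This is the step I expect to be genuinely clean, modulo having Step 1 in hand; the real work — and the main obstacle — is establishing that the kernel functor $K$ is projective, which is where the combinatorics of the free EI category (the set $V(\alpha)$, the elements $t_w(\alpha)$, and the freeness identity making $t_w(\alpha)$ well-defined) and the two-sided projectivity hypothesis must be combined carefully; everything else is formal homological algebra over a $1$-Gorenstein algebra.
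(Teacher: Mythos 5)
Your Step 2 and the overall architecture (prove $E$ Gorenstein-projective, prove $K$ projective, then conclude via $\Ext^1_{k\C}(G',K)=0$ for Gorenstein-projective $G'$) match the paper, which phrases the conclusion as: the sequence $0\to K\to E\to\underline{k}\to 0$ is a \emph{special} MCM-approximation, hence an MCM-approximation. But your Step 1 contains a genuine logical gap. The sequence $0\to K\to E\xrightarrow{\pi}\underline{k}\to 0$ has \emph{projective kernel}; it does not exhibit $E$ as a syzygy of $\underline{k}$. A syzygy sequence would have the projective in the \emph{middle}, $0\to\Omega\underline{k}\to P\to\underline{k}\to 0$, so the claim ``$E\cong\Omega\underline{k}$ up to projectives'' is simply not what this sequence says. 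An extension of $\underline{k}$ by a projective module need not be Gorenstein-projective: the split extension $K\oplus\underline{k}$ is a counterexample whenever $\underline{k}$ itself is not Gorenstein-projective (and Corollary 3.6 of the paper shows this happens for many free projective EI categories). Your fallback long-exact-sequence argument has the same problem: from $\Ext^1_{k\C}(K,k\C)=0$ one gets $\Ext^1_{k\C}(E,k\C)\cong\coker\bigl(\Hom_{k\C}(K,k\C)\xrightarrow{\delta}\Ext^1_{k\C}(\underline{k},k\C)\bigr)$, and if, as you assert, $\Hom_{k\C}(E,k\C)\to\Hom_{k\C}(K,k\C)$ were surjective then $\delta=0$ and $\Ext^1_{k\C}(E,k\C)\cong\Ext^1_{k\C}(\underline{k},k\C)$, which is generally nonzero --- the opposite of what you need. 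In short, Gorenstein-projectivity of $E$ cannot be extracted from the projectivity of $K$; it requires a separate argument about $E$ itself.

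The paper supplies exactly this missing argument: it filters $E$ by the subfunctors $0=Y^0\subseteq Y^1\subseteq\cdots\subseteq Y^n=E$ of Notation 2.7, identifies each subquotient $Y^t/Y^{t-1}$ as a column vector over the upper triangular matrix algebra $\Gamma_{\C}$, and embeds it into the $t$-th column $C_t$ of $\Gamma_{\C}$ (a projective module) by sending $e_{x_t}$ to $\sum_{g\in\Ac(x_t)}g$. Since $k\C$ is $1$-Gorenstein, a module is Gorenstein-projective as soon as it embeds in a projective, and closure under extensions finishes the job. Your instinct that the projectivity of $K$ is ``the real work'' is also only half right: the paper's Proposition 3.4 shows that (for $\C$ projective) the projectivity of $K$ is \emph{equivalent} to freeness of $\C$, via the identification $K\cong\bigoplus_{t}i_t(R_t)^*$ and results of \cite{WR}; your sketch that the bimodule projectivity alone makes the summands of $K$ projective omits precisely the role of freeness. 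To repair your proof you need (i) an honest proof that $K$ is projective and, independently, (ii) a direct proof that $E$ embeds into a projective module (or a filtration argument as in the paper); the deduction you propose in between is invalid.
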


This paper is organized as follows. In section 2, we recall some notation on finite free EI categories and construct the functor $E$. In section 3, we prove that $E$, viewed as a module over the category algebra, is Gorenstein-projective if in addition $\C$ is projective and prove Theorem~\ref{MR}. We mention that Proposition~\ref{FE} and Proposition~\ref{KERP} are new characterizations of finite free EI categories, and that Corollary~\ref{ST} describes when the trivial module is Gorenstein-projective.

\section{Finite free EI categories and the functor $E$}

Let $k$ be a field. Let $\C$ be a finite category, that is, it has
only finitely many morphisms, and consequently it has only finitely
many objects. Denote by $\Mor\C$ the finite set of all morphisms in
$\C$. Recall that the \emph{category algebra} \emph{k}$\C$ of $\C$ is defined as
follows: $\emph{k}\C=\bigoplus\limits_{\alpha \in \Mor\C}k\alpha$ as
a $k$-vector space and the product $*$ is given by the rule
\[\alpha * \beta=\left\{\begin{array}{ll}
\alpha\circ\beta, & \text{ if }\text{$\alpha$ and $\beta$ can be composed in $\C$}; \\
0, & \text{otherwise.}
\end{array}\right.\]
The unit is given by $1_{k\C}=\sum\limits_{x \in \Obj\C }\Id_x$,
where $\Id_x$ is the identity endomorphism of an object $x$ in $\C$.

If $\C$ and $\mathscr D$ are two equivalent finite categories, then
\emph{k}$\C$ and \emph{k}$\mathscr D$ are Morita equivalent; see
\cite[Proposition 2.2]{PWebb2}. In particular, $k\C$ is Morita
equivalent to $k\C_0$, where $\C_0$ is any skeleton of $\C$. 

In this paper, we assume that the finite category $\C$ is \emph{skeletal}, that is, any two distinct objects in $\C$ are not isomorphic.

\subsection{Finite free EI categories}
The category $\C$ is called a \emph{finite EI category} provided
that all endomorphisms in $\C$ are isomorphisms. In particular,
$\Homc(x,x)=\Ac(x)$ is a finite group for each object $x$ in $\C$. Denote by $k\Ac(x)$ the group algebra.

Let $\C$ be a finite EI category. Recall from ~\cite[Definition
2.3]{LLi} that a morphism $x\overset{\alpha}{\rightarrow} y$ in $\C$
is \emph{unfactorizable} if $\alpha$ is a non-isomorphism and
whenever it has a factorization $x
\overset{\beta}{\rightarrow} z \overset{\gamma}{\rightarrow} y$,
then either $\beta$ or $\gamma$ is an isomorphism. Let
$x\overset{\alpha}{\rightarrow} y$ be an unfactorizable
morphism. Then $h\circ\alpha\circ g$ is also unfactorizable for
every $h \in \Ac(y)$ and every $g \in \Ac(x)$; see \cite[Proposition
2.5]{LLi}. Any non-isomorphism $x\overset{\alpha}{\rightarrow} y$ in $\C$ has a decomposition
$x=x_0\overset{\alpha_1}{\rightarrow}
x_1\overset{\alpha_2}{\rightarrow} \cdots
\overset{\alpha_n}{\rightarrow} x_n=y$ with each $\alpha_i$
unfactorizable; see \cite[Proposition 2.6]{LLi}.

Following \cite[Definition 2.7]{LLi}, we say that a finite EI
category $\C$ satisfies the Unique Factorization Property (UFP), if
whenever a non-isomorphism $\alpha$ has two  decompositions into
unfactorizable morphisms:
\[ x=x_0\overset{\alpha_1}{\rightarrow}
x_1\overset{\alpha_2}{\rightarrow} \cdots
\overset{\alpha_m}{\rightarrow} x_m=y\] and
\[x=y_0\overset{\beta_1}{\rightarrow}
y_1\overset{\beta_2}{\rightarrow} \cdots
\overset{\beta_n}{\rightarrow} y_n=y,\] then $m=n$, $x_i=y_i$, and
there are $h_i\in \Ac(x_i)$, $1\leq i\leq n-1$ such that
$\beta_1=h_1\circ\alpha_1$, $\beta_2=h_2\circ\alpha_2\circ
h_1^{-1}$, $\cdots$, $\beta_{n-1}=h_{n-1}\circ\alpha_{n-1}\circ
h_{n-2}^{-1}$, $\beta_n=\alpha_n\circ h_{n-1}^{-1}$.

Let $\C$ be a finite EI category. Following \cite[Section 6]{LLi1},
we say that $\C$ is a finite \emph{free} EI category if it satisfies
the UFP. By ~\cite[Proposition 2.8]{LLi}, this is equivalent to the original definition ~\cite[Definition 2.2]{LLi}.

The following result is another characterization of a finite free EI category.

\begin{prop}\label{FE}
Let $\C$ be a finite EI category. Then $\C$ is free if and only if for any two  decompositions $x\overset{\alpha'}{\rightarrow} z\overset{\alpha''}{\rightarrow} y$ and  $x\overset{\beta'}{\rightarrow} w\overset{\beta''}{\rightarrow} y$ of a non-isomorphism $\alpha$ with $\alpha''$ and $\beta''$ non-isomorphisms, there is $z\overset{\gamma}{\rightarrow} w$ in $\C$ satisfying $\alpha''=\beta''\circ \gamma$ and $\beta'=\gamma \circ \alpha'$, or there is $w\overset{\delta}{\rightarrow} z$ in $\C$ satisfying $\beta''=\alpha''\circ \delta$ and $\alpha'=\delta \circ \beta'$.
\end{prop}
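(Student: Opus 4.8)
The plan is to prove both implications by reducing to the Unique Factorization Property, using the decomposition of non-isomorphisms into unfactorizable morphisms.

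For the forward implication, suppose $\C$ is free and we are given factorizations $x\overset{\alpha'}{\rightarrow} z\overset{\alpha''}{\rightarrow} y$ and $x\overset{\beta'}{\rightarrow} w\overset{\beta''}{\rightarrow} y$ of $\alpha$ with $\alpha''$ and $\beta''$ non-isomorphisms. First I would dispose of the degenerate cases: if $\alpha'$ is an isomorphism one takes $\gamma=\beta'\circ(\alpha')^{-1}$, and if $\beta'$ is an isomorphism one takes $\delta=\alpha'\circ(\beta')^{-1}$, the two required identities being immediate from $\alpha=\alpha''\circ\alpha'=\beta''\circ\beta'$. So assume all four morphisms are non-isomorphisms. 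Next I would fix an unfactorizable decomposition $x=x_0\overset{\sigma_1}{\rightarrow}x_1\overset{\sigma_2}{\rightarrow}\cdots\overset{\sigma_n}{\rightarrow}x_n=y$ of $\alpha$, refine the two given factorizations to unfactorizable decompositions, and invoke the UFP: this lets me assume both refinements pass through the same objects $x_0,\dots,x_n$, that $z=x_p$ and $w=x_q$ with $1\le p,q\le n-1$, that the first refinement is $(\sigma_i)$, and that the second refinement $(\beta_i)$ satisfies $\beta_1=h_1\sigma_1$, $\beta_i=h_i\sigma_ih_{i-1}^{-1}$ for $2\le i\le n-1$, and $\beta_n=\sigma_nh_{n-1}^{-1}$ for suitable $h_i\in\Ac(x_i)$. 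Assuming $p\le q$ (and passing to $\delta$ by symmetry otherwise), I would set $\gamma=\beta_q\circ\cdots\circ\beta_{p+1}\circ h_p\colon x_p\to x_q$; a telescoping computation, collapsing the relations above one factor at a time with the evident reading of the boundary cases, yields $h_p\circ\sigma_p\circ\cdots\circ\sigma_1=\beta_p\circ\cdots\circ\beta_1$ and $\beta_n\circ\cdots\circ\beta_{p+1}\circ h_p=\sigma_n\circ\cdots\circ\sigma_{p+1}$, hence $\beta'=\gamma\circ\alpha'$ and $\alpha''=\beta''\circ\gamma$.

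For the converse, suppose the factorization property holds; I would establish the UFP by induction on the length $n$ of one of the two given unfactorizable decompositions of a non-isomorphism $\alpha$. I would first record that in a finite EI category any composite involving a non-isomorphism is a non-isomorphism (a one-sided inverse of a morphism produces an idempotent endomorphism, which in a group must be the identity, hence a two-sided inverse). The base case $n=1$ is then immediate: $\alpha$ is unfactorizable, so the other decomposition cannot have length $\ge 2$. For the inductive step, write $\alpha=\alpha_m\circ\cdots\circ\alpha_1=\beta_n\circ\cdots\circ\beta_1$ with all factors unfactorizable and $n\ge 2$, so $m\ge 2$ as well, and peel off the last morphism from each: applying the hypothesis to the factorizations of $\alpha$ through $x_{m-1}$ (with second component $\alpha_m$) and through $y_{n-1}$ (with second component $\beta_n$) produces, up to symmetry, a morphism $\gamma\colon x_{m-1}\to y_{n-1}$ with $\alpha_m=\beta_n\circ\gamma$ and $\beta_{n-1}\circ\cdots\circ\beta_1=\gamma\circ\alpha_{m-1}\circ\cdots\circ\alpha_1$. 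Since $\alpha_m$ is unfactorizable and $\beta_n$ is a non-isomorphism, $\gamma$ must be an isomorphism, whence $x_{m-1}=y_{n-1}$ by skeletality and $\gamma\in\Ac(x_{m-1})$. Then $\gamma^{-1}\circ\beta_{n-1}$ is still unfactorizable by \cite[Proposition 2.5]{LLi}, and $\alpha_{m-1}\circ\cdots\circ\alpha_1=(\gamma^{-1}\circ\beta_{n-1})\circ\beta_{n-2}\circ\cdots\circ\beta_1$ exhibits this non-isomorphism with two unfactorizable decompositions, of lengths $m-1$ and $n-1$. The inductive hypothesis then yields $m=n$, the equality of the intermediate objects, and automorphisms $h_1,\dots,h_{n-2}$ realizing the UFP relations for the shortened decompositions, with last relation $\gamma^{-1}\circ\beta_{n-1}=\alpha_{n-1}\circ h_{n-2}^{-1}$; setting $h_{n-1}:=\gamma$ and using $\beta_n=\alpha_m\circ\gamma^{-1}$, I would verify the two remaining relations $\beta_{n-1}=h_{n-1}\alpha_{n-1}h_{n-2}^{-1}$ and $\beta_n=\alpha_n h_{n-1}^{-1}$ (with the obvious modification when $n=2$), completing the UFP for $\alpha$.

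I expect the main obstacle to be the converse direction, not the computations, which are light, but the structural idea: one must recognize that applying the factorization property after peeling off a single unfactorizable morphism from each decomposition forces the comparison morphism to be an isomorphism — there is no room for anything else inside an unfactorizable morphism — and that this isomorphism is precisely the twisting automorphism needed to feed the induction. The only genuinely fiddly point is matching the boundary indices ($n=1,2$) in the UFP relations in both directions.
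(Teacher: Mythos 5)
Your proof is correct and follows essentially the same route as the paper's: the forward direction by refining both factorizations into compositions of unfactorizable morphisms and invoking the UFP, the converse by inductively peeling off one unfactorizable factor and observing that the comparison morphism supplied by the hypothesis is forced to be an automorphism (you peel from the codomain end where the paper peels from the domain end, a purely cosmetic difference). You are in fact slightly more explicit than the paper about the base case and about why composites involving non-isomorphisms remain non-isomorphisms in a finite EI category.
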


\begin{proof}
For the ``only if" part, we assume that $\C$ is a finite free EI category. Let $\alpha$ be a non-isomorphism with two  decompositions $x\overset{\alpha'}{\rightarrow} z\overset{\alpha''}{\rightarrow} y$ and  $x\overset{\beta'}{\rightarrow} w\overset{\beta''}{\rightarrow} y$ for $\alpha''$ and $\beta''$ non-isomorphisms. 

If $w=x$, we have that $\beta'$ is an isomorphism. Then we take $\delta=\alpha' \circ \beta'^{-1}$. If $z=x$, we have that $\alpha'$ is an isomorphism. Then we take $\gamma=\beta' \circ \alpha'^{-1}$. 

If $w,z\neq x$, we have that both $\alpha'$ and $\beta'$ are non-isomorphisms. Write the non-isomorphisms $\alpha',\alpha'',\beta',\beta''$ as compositions of unfactorizable
morphisms. Then we have two decompositions of $\alpha$ as follows
	\begin{equation*}
	\xymatrix@C=0.5cm{
		x\ar[r]^{\alpha'_1} \ar@{=}[d] & x_1 \ar[r]^{\alpha'_2}  &  \cdots \ar[r]^{\alpha'_m}  & x_m=z \ar[r]^{\alpha''_1} &  z_1 \ar[r]^{\alpha''_2} &  \cdots \ar[r]^{\alpha''_s} & z_s=y \ar@{=}[d] \\
		x\ar[r]^{\beta'_1} & w_1 \ar[r]^{\beta'_2} & \cdots \ar[r]^{\beta'_n}  & w_n=w \ar[r]^{\beta''_1} & y_1 \ar[r]^{\beta''_2} &  \cdots \ar[r]^{\beta''_t} & y_t=y,
	}
	\end{equation*}
where all $\alpha'_i, \alpha''_j, \beta'_l, \beta''_k$ are unfactorizable
morphisms and $\alpha'=\alpha'_m\circ \cdots \circ\alpha'_1$, $\alpha''=\alpha''_s\circ \cdots \circ\alpha''_1$, $\beta'=\beta'_n\circ \cdots \circ\beta'_1$, $\beta''=\beta''_t\circ \cdots \circ\beta''_1$. We apply the UFP for $\alpha$. If $m<n$, then $x_i=w_i$ for $1\leq i\leq m$, $z_j=w_{m+j}$ for $1\leq j\leq n-m$, $y_l=z_{n-m+l}$ for $1\leq l\leq t$, and there is $g: x_m\rightarrow x_m=w_m$ such that $\alpha''=\beta''\circ \beta'_n\circ \cdots \circ\beta'_{m+1}\circ g$ and $g\circ \alpha'=\beta'_m\circ \cdots \circ\beta'_1$. We take $\gamma=\beta'_n\circ \cdots \circ\beta'_{m+1}\circ g$. The cases $m=n$ and $m>n$ are similar.

For the ``if" part, let $\alpha$ be a non-isomorphism with the following two decompositions into
unfactorizable morphisms:
\[ x=x_0\overset{\alpha_1}{\rightarrow}
x_1\overset{\alpha_2}{\rightarrow} \cdots
\overset{\alpha_m}{\rightarrow} x_m=y\] and
\[x=y_0\overset{\beta_1}{\rightarrow}
y_1\overset{\beta_2}{\rightarrow} \cdots
\overset{\beta_n}{\rightarrow} y_n=y.\] Since $\alpha''=\alpha_m\circ \cdots \circ\alpha_2$ and $\beta''=\beta_n\circ \cdots \circ\beta_2$ are non-isomorphisms, by the hypothesis we may assume that there is $x_1 \overset{g_1}{\rightarrow} y_1$ in $\C$ such that $\beta_1=g_1\circ\alpha_1$ and $\alpha''=\beta''\circ g_1$. Since both $\alpha_1$ and $\beta_1$ are  unfactorizable, we infer that $g_1$ is an isomorphism and thus $x_1=y_1$. Then we have the following two decompositions  of $\alpha''$ into
unfactorizable morphisms:
\[ x_1\overset{\alpha_2}{\rightarrow}
x_2\overset{\alpha_3}{\rightarrow} \cdots
\overset{\alpha_m}{\rightarrow} x_m=y\] and
\[x_1\overset{\beta_2\circ g_1}{\rightarrow}
y_2\overset{\beta_3}{\rightarrow} \cdots
\overset{\beta_n}{\rightarrow} y_n=y.\]
Then we are done by repeating the above argument.
\end{proof}

\subsection{The functor $E$}
Let $\C$ be a finite free EI category. We will construct the functor $E:\C\longrightarrow k\text{\rm -mod}$ in the introduction, where $k$-mod is the category of finite dimensional $k$-vector spaces.

 Denote by $k\C$-mod the category of finite dimensional left modules over the category algebra $k\C$, and by ($k$-mod)$^{\C}$ the category of covariant functors from $\C$ to $k$-mod. There is a well-known equivalence $k\C$-mod $\simeq$ ($k$-mod)$^{\C}$; see~\cite[Theorem 7.1]{M}. We identify a $k\C$-module with a functor from $\C$ to $k$-mod. 

\begin{note}\label{V}
{\rm Let $\C$ be a finite free EI category. For any $x\overset{\alpha}{\rightarrow} y$ in $\C$, set 
\[V(\alpha)=\{w\in {\rm Obj}\C\mid \exists x\overset{\alpha'}{\rightarrow}
w\overset{\alpha''}{\rightarrow} y\   \text{such that}\ \alpha=\alpha''\circ \alpha' \ \text{with}\  \alpha''\  \text{a non-isomorphism} \}.\]
If $\alpha$ is an isomorphism, $V(\alpha)=\emptyset$ .
For any $w\in V(\alpha)$, we define \[t_w(\alpha)=\alpha''\circ(\sum\limits_{g\in \Ac(w)} g)\in k\Homc(w,y),\] where the non-isomorphism $\alpha'':w\rightarrow y$ is given by a factorization $x\overset{\alpha'}{\rightarrow}
w\overset{\alpha''}{\rightarrow}y$ of $\alpha$. By Proposition~\ref{FE}, such a morphism $\alpha''$ is unique up to an automorphism of $w$. It follows that the element $t_w(\alpha)$ is independent of the choice of $\alpha''$.}
\end{note}

\begin{lem}\label{VC}
Let $\C$ be a finite free EI category. For any $x\overset{\alpha}{\rightarrow} y$ and $y\overset{\beta}{\rightarrow} z$ in $\C$, we have
\[V(\beta\circ \alpha)=V(\beta)\sqcup V(\alpha),\] where the right hand side is a disjoint union.
\end{lem}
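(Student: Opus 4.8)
The plan is to reduce the statement to the characterization of freeness in Proposition~\ref{FE}, after first disposing of the cases in which $\alpha$ or $\beta$ is an isomorphism. The only ingredient about finite EI categories that I need is elementary: if $a\overset{f}{\rightarrow}b$ is a non-isomorphism, then there is no morphism $b\to a$ at all, since any $g\colon b\to a$ would make $g\circ f$ and $f\circ g$ endomorphisms, hence isomorphisms, forcing $f$ to be an isomorphism; in particular a composite of an isomorphism with a non-isomorphism (in either order) is again a non-isomorphism, and $\beta\circ\alpha$ is a non-isomorphism whenever $\alpha$ and $\beta$ both are. If $\alpha$ is an isomorphism then $V(\alpha)=\emptyset$, and composing factorizations with $\alpha^{-1}$ (resp.\ with $\alpha$) shows $V(\beta\circ\alpha)=V(\beta)$; symmetrically, if $\beta$ is an isomorphism then $V(\beta)=\emptyset$ and $V(\beta\circ\alpha)=V(\alpha)$. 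So I may assume from now on that $\alpha$, $\beta$, and hence $\beta\circ\alpha$, are non-isomorphisms.

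Next I would establish the easy inclusion $V(\alpha)\cup V(\beta)\subseteq V(\beta\circ\alpha)$ together with disjointness. If $w\in V(\alpha)$, write $\alpha=\alpha''\circ\alpha'$ with $\alpha''\colon w\to y$ a non-isomorphism; then $\beta\circ\alpha=(\beta\circ\alpha'')\circ\alpha'$ and $\beta\circ\alpha''$ is a non-isomorphism (otherwise $(\beta\circ\alpha'')^{-1}\circ\beta$ would be a morphism $y\to w$, contradicting the observation above applied to $\alpha''$), so $w\in V(\beta\circ\alpha)$. If $w\in V(\beta)$, write $\beta=\beta''\circ\beta'$ with $\beta''\colon w\to z$ a non-isomorphism; then $\beta\circ\alpha=\beta''\circ(\beta'\circ\alpha)$, so $w\in V(\beta\circ\alpha)$. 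For disjointness, a common element $w$ of $V(\alpha)$ and $V(\beta)$ would provide simultaneously a non-isomorphism $w\to y$ (from $V(\alpha)$) and a morphism $y\to w$ (the first factor of $\beta$, from $V(\beta)$), which is impossible.

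The substantial inclusion is $V(\beta\circ\alpha)\subseteq V(\alpha)\cup V(\beta)$, and this is the only place where freeness enters. Let $w\in V(\beta\circ\alpha)$, say $\beta\circ\alpha=\mu''\circ\mu'$ with $\mu'\colon x\to w$ and $\mu''\colon w\to z$ a non-isomorphism. If $\mu'$ is an isomorphism, then $w=x$ by skeletality and $x\in V(\alpha)$ via $\alpha=\alpha\circ\Id_x$. If $\mu'$ is a non-isomorphism, I apply Proposition~\ref{FE} to the two decompositions $x\overset{\alpha}{\rightarrow}y\overset{\beta}{\rightarrow}z$ and $x\overset{\mu'}{\rightarrow}w\overset{\mu''}{\rightarrow}z$ of the non-isomorphism $\beta\circ\alpha$, whose two relevant second factors $\beta$ and $\mu''$ are non-isomorphisms. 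In the first alternative there is $\gamma\colon y\to w$ with $\beta=\mu''\circ\gamma$; then $y\overset{\gamma}{\rightarrow}w\overset{\mu''}{\rightarrow}z$ is a factorization of $\beta$ with $\mu''$ a non-isomorphism, so $w\in V(\beta)$. In the second alternative there is $\delta\colon w\to y$ with $\alpha=\delta\circ\mu'$; if $\delta$ is a non-isomorphism then $x\overset{\mu'}{\rightarrow}w\overset{\delta}{\rightarrow}y$ shows $w\in V(\alpha)$, and if $\delta$ is an isomorphism then $w=y$ by skeletality and $y\in V(\beta)$ via $\beta=\beta\circ\Id_y$. Hence $w\in V(\alpha)\cup V(\beta)$ in every case, and combining with the previous paragraph gives $V(\beta\circ\alpha)=V(\alpha)\sqcup V(\beta)$.

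I expect the main obstacle to be the bookkeeping about which morphisms are isomorphisms: Proposition~\ref{FE} may only be invoked once the relevant second factors are known to be non-isomorphisms, and whenever an intermediate morphism turns out to be an isomorphism one has to collapse objects using skeletality and identify the survivor correctly. Note in particular that $y$ always lies in $V(\beta)$ (since $\beta=\beta\circ\Id_y$) but never in $V(\alpha)$, so the two alternatives of Proposition~\ref{FE} are genuinely asymmetric in their degenerate sub-cases.
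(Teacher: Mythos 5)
Your proof is correct and takes essentially the same route as the paper's: disjointness and the inclusion $V(\beta)\sqcup V(\alpha)\subseteq V(\beta\circ\alpha)$ are checked directly, and the reverse inclusion is obtained by applying Proposition~\ref{FE} to the two decompositions $x\overset{\alpha}{\rightarrow}y\overset{\beta}{\rightarrow}z$ and $x\overset{\mu'}{\rightarrow}w\overset{\mu''}{\rightarrow}z$ of $\beta\circ\alpha$. The only difference is bookkeeping: you reduce to the case where $\alpha$ and $\beta$ are non-isomorphisms up front, whereas the paper instead splits on $w=y$ versus $w\neq y$ and on whether $\beta$ is an isomorphism.
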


\begin{proof}
First, we prove that $V(\beta)\cap V(\alpha)=\emptyset$. Assume that there is a $w\in V(\beta)\cap V(\alpha)$. Then there are $x\overset{\alpha'}{\rightarrow}
w\overset{\alpha''}{\rightarrow}y$ and $y\overset{\beta'}{\rightarrow}
w\overset{\beta''}{\rightarrow}z$ such that $\alpha=\alpha''\circ \alpha'$ and $\beta=\beta''\circ \beta'$ with $\alpha'',\beta''$ non-isomorphisms. Since $\beta'\circ \alpha''\in \Ac(w)$, we have that $\alpha''$ is an isomorphism, which is a contradiction. Hence  $V(\beta)\cap V(\alpha)=\emptyset$.

For any $w\in V(\beta)$, there exist $y\overset{\beta'}{\rightarrow}
w\overset{\beta''}{\rightarrow}z$ such that $\beta=\beta''\circ \beta'$ with $\beta''$ a non-isomorphism. Then we have that $\beta\circ \alpha=\beta''\circ (\beta'\circ \alpha)$ with $\beta''$ a non-isomorphism, that is, $w\in V(\beta\circ \alpha)$. For any $w\in V(\alpha)$, there exist $x\overset{\alpha'}{\rightarrow}
w\overset{\alpha''}{\rightarrow}y$ such that $\alpha=\alpha''\circ \alpha'$ with $\alpha''$ a non-isomorphism. Then we have that $\beta\circ \alpha=(\beta\circ \alpha'')\circ \alpha'$. Since $\alpha''$ is a non-isomorphism, we have that $\beta\circ \alpha''$ is a non-isomorphism. Then we have that $w\in V(\beta\circ \alpha)$. In summary, we have $V(\beta)\sqcup V(\alpha)\subseteq V(\beta\circ \alpha)$.

 Let $w\in V(\beta\circ \alpha)$. Then there exist $x\overset{\eta'}{\rightarrow}
w\overset{\eta''}{\rightarrow}z$ in $\C$ such that $\beta\circ \alpha=\eta''\circ \eta'$ with $\eta''$ a non-isomorphism. Assume that $w=y$. By the non-isomorphism $\eta''$, we have that $w\neq z$. Then $y\neq z$ and $\beta$ is a non-isomorphism. Hence we have $y\in V(\beta)$ and thus $w\in V(\beta)$. 

Assume that $w\neq y$. If $\beta$ is an isomorphism, we have that $\alpha=(\beta^{-1}\circ \eta'')\circ \eta'$ with $\beta^{-1}\circ \eta''$ a non-isomorphism. This implies that $w\in V(\alpha)$. Assume that $\beta$ is a non-isomorphism. By Proposition~\ref{FE}, we have that there is a non-isomorphism $y\overset{\gamma}{\rightarrow} w$ satisfying $\beta=\eta''\circ \gamma$ or there is a non-isomorphism $w\overset{\delta}{\rightarrow} y$ satisfying $\alpha=\delta\circ \eta'$. This implies that $w\in V(\beta)$ or $w\in V(\alpha)$, since $\eta''$ and $\delta$ are non-isomorphisms. This proves $V(\beta\circ \alpha)\subseteq V(\beta)\sqcup V(\alpha)$, and we are done.
\end{proof}

\begin{lem}\label{TwE}
	Let $\C$ be a finite free EI category and $x\overset{\alpha}{\rightarrow} y$, $y\overset{\beta}{\rightarrow} z$ be two morphisms in $\C$. For  $w\in V(\beta)\subseteq V(\beta\circ \alpha)$, we have $t_w(\beta\circ \alpha)=t_w(\beta)$. For $w\in V(\alpha)\subseteq V(\beta\circ \alpha)$, we have $t_w(\beta\circ \alpha)=\beta\circ t_w(\alpha)$.
\end{lem}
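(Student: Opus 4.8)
The plan is to reduce everything to the definition of $t_w$ in Notation~\ref{V}, exploiting the key feature proved there (via Proposition~\ref{FE}): the value $t_w(-)$ does not depend on which factorization through $w$ one uses, because the non-isomorphism part is unique up to an automorphism of $w$ and the idempotent-like element $\sum\limits_{g\in\Ac(w)}g$ absorbs such automorphisms. So it suffices, in each of the two cases, to write down one convenient factorization of $\beta\circ\alpha$ through $w$ and read off $t_w(\beta\circ\alpha)$ from it.

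For the first assertion, take $w\in V(\beta)$ and a factorization $y\overset{\beta'}{\rightarrow} w\overset{\beta''}{\rightarrow} z$ of $\beta$ with $\beta''$ a non-isomorphism, so that $t_w(\beta)=\beta''\circ\big(\sum\limits_{g\in\Ac(w)}g\big)$. Then $\beta\circ\alpha=\beta''\circ(\beta'\circ\alpha)$ is a factorization of $\beta\circ\alpha$ through $w$ with the same non-isomorphism $\beta''$ on the right, hence $t_w(\beta\circ\alpha)=\beta''\circ\big(\sum\limits_{g\in\Ac(w)}g\big)=t_w(\beta)$. For the second assertion, take $w\in V(\alpha)$ and a factorization $x\overset{\alpha'}{\rightarrow} w\overset{\alpha''}{\rightarrow} y$ of $\alpha$ with $\alpha''$ a non-isomorphism, so that $t_w(\alpha)=\alpha''\circ\big(\sum\limits_{g\in\Ac(w)}g\big)$. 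Then $\beta\circ\alpha=(\beta\circ\alpha'')\circ\alpha'$ factors through $w$, and, exactly as used in the proof of Lemma~\ref{VC}, $\beta\circ\alpha''$ is again a non-isomorphism because $\alpha''$ is. Therefore $t_w(\beta\circ\alpha)=(\beta\circ\alpha'')\circ\big(\sum\limits_{g\in\Ac(w)}g\big)=\beta\circ\Big(\alpha''\circ\big(\sum\limits_{g\in\Ac(w)}g\big)\Big)=\beta\circ t_w(\alpha)$, where the middle equality is the $k$-bilinearity of composition in $k\C$.

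The only point requiring care is invoking the well-definedness from Notation~\ref{V}: a priori $t_w(\beta\circ\alpha)$ is computed from an arbitrary factorization of $\beta\circ\alpha$ through $w$, not necessarily the one constructed above, but Proposition~\ref{FE} guarantees any two such factorizations differ by an automorphism of $w$ on the relevant side, which the sum over $\Ac(w)$ makes irrelevant. Beyond this bookkeeping — and the stability of non-isomorphisms under post-composition, already established in Lemma~\ref{VC} — I expect no genuine obstacle; the computation is essentially immediate once the correct factorizations are chosen.
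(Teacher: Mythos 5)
Your proof is correct and follows essentially the same route as the paper: in each case you exhibit the natural factorization of $\beta\circ\alpha$ through $w$ inherited from the given factorization of $\beta$ (resp.\ $\alpha$) and read off $t_w(\beta\circ\alpha)$, relying on the well-definedness of $t_w$ established in Notation~\ref{V}. Your explicit remark that this well-definedness (via Proposition~\ref{FE}) is what licenses computing $t_w(\beta\circ\alpha)$ from the chosen factorization is a point the paper leaves implicit, but the argument is the same.
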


\begin{proof}
For an object $w$ in $\C$, we set $G_w=\Ac(w)$. If $w\in V(\beta)$, then there exist $y\overset{\beta'}{\rightarrow}
w\overset{\beta''}{\rightarrow}z$ in $\C$ such that $\beta=\beta''\circ \beta'$ with $\beta''$ a non-isomorphism. By definition we have $t_w(\beta)=\beta''\circ(\sum\limits_{g\in G_w} g)$. By $\beta\circ \alpha=\beta''\circ (\beta'\circ \alpha)$, we have $t_w(\beta\circ \alpha)=\beta''\circ(\sum\limits_{g\in G_w} g)=t_w(\beta)$. 

If $w\in V(\alpha)$, then there exist $x\overset{\alpha'}{\rightarrow}
w\overset{\alpha''}{\rightarrow}y$ in $\C$ such that $\alpha=\alpha''\circ \alpha'$ with $\alpha''$ a non-isomorphism. By definition, we have $t_w(\alpha)=\alpha''\circ(\sum\limits_{g\in G_w} g)$. We observe that $\beta\circ \alpha=(\beta \circ \alpha'')\circ \alpha'$ with $\beta \circ \alpha''$ a non-isomorphism, since $\alpha''$ is a non-isomorphism. Then we have $t_w(\beta\circ \alpha)=(\beta\circ \alpha'')\circ(\sum\limits_{g\in G_w} g)=\beta\circ t_w(\alpha)$. 
\end{proof}
\begin{defi}\label{TDOFE}
	Let $\C$ be a finite free EI category. We define
	\[E:\C\longrightarrow k\text{\rm -mod}\] 
	as follows: for any $x\in \Obj\C$, $E(x)$ is a $k$-vector space
	with basis \[B_x=\bigsqcup\limits_{w\neq x} \Homc(w,x)\cup \{e_x\};\]
	for any $x\overset{\alpha}{\rightarrow} y$ in $\C$, the $k$-linear map $E(\alpha):E(x)\rightarrow E(y)$ is given by \[E(\alpha)(e_x)=e_y+\sum\limits_{w\in V(\alpha)} t_w(\alpha)\] and \[E(\alpha)(\gamma)=\alpha\circ\gamma\]
	for $\gamma\in \Homc(w,x)$ with $w\neq x$.
\end{defi}

\begin{prop}\label{EF}
		Let $\C$ be a finite free EI category. The above $E:\C\longrightarrow k\text{\rm -mod}$ is a functor.
\end{prop}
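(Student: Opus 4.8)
The plan is to verify the two functor axioms — preservation of identities and of composition — directly on the basis $B_x$ of each $E(x)$, after first checking that each $E(\alpha):E(x)\rightarrow E(y)$ is a well-defined $k$-linear map. For the latter, the only point needing attention is that, for $\gamma\in\Homc(w,x)$ with $w\neq x$, the image $\alpha\circ\gamma$ is again a basis vector of $E(y)$, i.e.\ that $w\neq y$. This follows from the EI property and skeletality: if $w=y$, then $\gamma:y\rightarrow x$ and $\alpha:x\rightarrow y$ yield $\alpha\circ\gamma\in\Ac(y)$ and $\gamma\circ\alpha\in\Ac(x)$, so $\alpha$ is an isomorphism and $w=y\cong x$, whence $w=x$, a contradiction. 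Likewise, for $w\in V(\alpha)$ the factor $\alpha'':w\rightarrow y$ is a non-isomorphism, so $w\neq y$ and $t_w(\alpha)\in k\Homc(w,y)\subseteq E(y)$; hence $E(\alpha)(e_x)=e_y+\sum\limits_{w\in V(\alpha)}t_w(\alpha)\in E(y)$, and $E(\alpha)$ is well-defined.

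Preservation of identities is immediate: $\Id_x$ is an isomorphism, so $V(\Id_x)=\emptyset$ by Notation~\ref{V}, giving $E(\Id_x)(e_x)=e_x$, while $E(\Id_x)(\gamma)=\Id_x\circ\gamma=\gamma$ on the remaining basis vectors; thus $E(\Id_x)=\Id_{E(x)}$. For composition, fix $x\overset{\alpha}{\rightarrow}y\overset{\beta}{\rightarrow}z$ and compare $E(\beta\circ\alpha)$ with $E(\beta)\circ E(\alpha)$ on $B_x$. On $\gamma\in\Homc(w,x)$ with $w\neq x$, both sides equal $(\beta\circ\alpha)\circ\gamma$ by associativity, using that $\alpha\circ\gamma$ is a basis vector of $E(y)$ on which $E(\beta)$ acts by post-composition with $\beta$ (the same argument as above shows $w\neq y$ and $w\neq z$, so all these composites are honest basis vectors).

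The remaining case is $e_x$. Expanding the right-hand side — and using that each $t_w(\alpha)$, for $w\in V(\alpha)$, is a $k$-combination of morphisms $w\rightarrow y$ with $w\neq y$, so that $E(\beta)$ again acts by post-composition — gives
\[
E(\beta)\bigl(E(\alpha)(e_x)\bigr)=E(\beta)(e_y)+\sum\limits_{w\in V(\alpha)}E(\beta)\bigl(t_w(\alpha)\bigr)=e_z+\sum\limits_{w\in V(\beta)}t_w(\beta)+\sum\limits_{w\in V(\alpha)}\beta\circ t_w(\alpha).
\]
By Lemma~\ref{TwE}, $t_w(\beta)=t_w(\beta\circ\alpha)$ for $w\in V(\beta)$ and $\beta\circ t_w(\alpha)=t_w(\beta\circ\alpha)$ for $w\in V(\alpha)$; by Lemma~\ref{VC}, $V(\beta\circ\alpha)=V(\beta)\sqcup V(\alpha)$. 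Hence the displayed expression equals $e_z+\sum\limits_{w\in V(\beta\circ\alpha)}t_w(\beta\circ\alpha)=E(\beta\circ\alpha)(e_x)$, which finishes the verification.

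The substantive combinatorics has already been packaged into Lemmas~\ref{VC} and~\ref{TwE} (which rest on Proposition~\ref{FE}), so the only real obstacle here is bookkeeping: keeping track of which composites are genuine basis vectors, so that $E(\beta)$ may legitimately be applied term by term via post-composition. Once the observations $w\neq y$ and $w\neq z$ are in hand, the identity $E(\beta\circ\alpha)=E(\beta)\circ E(\alpha)$ reduces to a short rewrite using the two lemmas.
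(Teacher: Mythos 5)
Your proof is correct and follows essentially the same route as the paper: the composition axiom is checked on the basis, the only nontrivial case being $e_x$, which reduces to Lemma~\ref{VC} and Lemma~\ref{TwE}. The extra care you take with well-definedness (that $w\neq y$, via the EI property and skeletality) and with preservation of identities is sound bookkeeping that the paper leaves implicit.
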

\begin{proof}
	For any $x\overset{\alpha}{\rightarrow} y\overset{\beta}{\rightarrow} z$ in $\C$, since $E(\beta\circ\alpha)(\gamma)=\beta\circ\alpha\circ \gamma=E(\beta)( E(\alpha)(\gamma))$ for $\gamma\in \Homc(w,x)$ with $w\neq x$. It suffices to prove that $E(\beta\circ\alpha)(e_x)=E(\beta)(E(\alpha)(e_x))$. We have \[E(\beta\circ\alpha)(e_x)=e_z+\sum\limits_{w\in V(\beta\circ\alpha)} t_w(\beta\circ\alpha)\] and \[E(\beta)(E(\alpha)(e_x))=e_z+\sum\limits_{w\in V(\beta)} t_w(\beta)+\beta\circ(\sum\limits_{w\in V(\alpha)} t_w(\alpha)).\] Then we are done by Lemma~\ref{VC} and Lemma~\ref{TwE}.
\end{proof}

We now introduce subfunctors of $E$ for later use. Let $\C$ be a finite free EI category. By the EI property, we may assume that $\Obj\C=\{x_1,x_2,\cdots,x_n\}$  satisfying
$\Homc(x_i,x_j)=\emptyset$ if $i<j$.
\begin{note}\label{SUBFE}
	For each $1\leq t\leq n$, we define a functor 
	\[Y^t:\C\longrightarrow k\text{\rm -mod}\] 
	as follows: $Y^t(x_i)$ is a $k$-vector space
	with basis \[B^t_i=\bigsqcup\limits_{l=i+1}^t \Homc(x_l,x_i)\cup \{e_{x_i}\}\] for $1\leq i\leq t$
	and $Y^t(x_i)=0$ for $i>t$;
	for any $x_j\overset{\alpha}{\rightarrow} x_i$ with  $j\leq t$, the $k$-linear map $Y^t(\alpha):Y^t(x_j)\rightarrow Y^t(x_i)$ is given by  \[Y^t(\alpha)(e_{x_j})=e_{x_i}+\sum\limits_{x_l\in V(\alpha)} t_{x_l}(\alpha)\] and \[Y^t(\alpha)(\gamma)=\alpha\circ\gamma\]
	for $\gamma\in B^t_j\backslash \{e_{x_j}\}$; $Y^t(\alpha)=0$ if $j>t$.
\end{note}

We observe that $Y^n=E$ and that $Y^t$ is a subfunctor of $Y^{t+1}$ for each $1\leq t\leq n-1$.

\subsection{An exact sequence}
We will describe a subfunctor of the functor $E$ such that the quotient functor is isomorphic to the constant functor.

Recall that the \emph{constant functor} $\underline{k}: \C\rightarrow k$-mod is defined by $\underline{k}(x)=k$ for all $x\in \Obj\C$ and $\underline{k}(\alpha)=\Id_k$ for all $\alpha\in \Mor\C$. The corresponding $k\C$-module is called the \emph{trivial module}.

\begin{note}\label{KER}
	Let $\C$ be a finite EI category. We define a functor 
	\[K:\C\longrightarrow k\text{\rm -mod}\] 
	as follows: for any $x\in \Obj\C$, $K(x)$ is a $k$-vector space
	with basis \[B'_x=\bigsqcup\limits_{w\neq x} \Homc(w,x);\]
	for any $x\overset{\alpha}{\rightarrow} y$ in $\C$, the $k$-linear map $K(\alpha):K(x)\rightarrow K(y)$ is given by \[K(\alpha)(\gamma)=\alpha\circ\gamma\]
	for $\gamma\in \Homc(w,x)$ with $w\neq x$.
\end{note}
Let $\C$ be a finite free EI category. We observe that $K$ is a subfunctor of $E$. 

 We have a surjective natural transformation $E\overset{\pi}{\longrightarrow} \underline{k}$ as follows: for any $x\in \Obj\C$, $E(x) \overset{\pi_x}{\longrightarrow} \underline{k}(x)=k$ sends $e_x$ to $1_k$, and $\gamma$ to zero for $\gamma\in \Homc(w,x)$ with $w\neq x$. Then we have an exact sequence of functors
 \begin{align}\label{SES}
0\longrightarrow K \overset{{\rm inc}}{\longrightarrow} E\overset{\pi}{\longrightarrow} \underline{k}\longrightarrow 0.
\end{align}

In what follows, we study when the above exact sequence splits, or equivalently, the epimorphism $\pi$ splits.

Recall that a finite category $\C$ is \emph{connected} if for any two distinct objects $x$ and $y$, there is a sequence of objects $x=x_0,x_1,\cdots,x_m=y$ such that either $\Homc(x_i,x_{i+1})$ or $\Homc(x_{i+1},x_i)$ is not empty, $0\leq i\leq m-1$. We say that the category $\C$ has a \emph{smallest object} $z$, if $\Homc(z,x)\neq \emptyset$ for each object $x$.

\begin{prop}\label{ST1}
	Let $\C$ be a finite connected free EI category. Then we have the following two statements.
	\begin{enumerate}
	\item If the category $\C$ has a smallest object $z$ such that $\Homc(z,x)$ has only one $\Ac(z)$-orbit for each object $x$, then the epimorphism $\pi$ in (\ref{SES}) splits.
	\item Assume that $\Ac(x)$ acts freely on $\Homc(x,y)$ for any objects $x$ and $y$. 
	If $\pi$ splits, then the category $\C$ has a smallest object $z$ such that $\Homc(z,x)$ has only one $\Ac(z)$-orbit for each object $x$.
	\end{enumerate}
\end{prop}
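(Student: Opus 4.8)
The plan is to prove the two implications separately, using the exact sequence (\ref{SES}) and the explicit description of $E$, $K$, and $\pi$.

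For part (1), assume $\C$ has a smallest object $z$ with $\Homc(z,x)$ a single $\Ac(z)$-orbit for every $x$. I would construct a splitting $s:\underline{k}\to E$ of $\pi$ directly. For each object $x$, fix a morphism $\iota_x:z\to x$ (with $\iota_z=\Id_z$), and set $s_x(1_k)$ to be a suitable element of $E(x)$ built from $e_x$ and the element $\frac{1}{|\Ac(z)|}\iota_x\circ(\sum_{g\in\Ac(z)}g)$-type averages; more precisely I expect $s_x(1_k)=e_x$ will \emph{not} be natural by itself (since $E(\alpha)(e_x)=e_y+\sum_{w\in V(\alpha)}t_w(\alpha)$ picks up correction terms), so one sets $s_x(1_k)=e_x+c_x$ where $c_x\in K(x)$ is a correction term chosen so that naturality $E(\alpha)\circ s_x=s_y$ holds for all $\alpha:x\to y$. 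The natural candidate is $c_x = -\frac{1}{|\Ac(z)|}\,\iota_x\circ(\sum_{g\in\Ac(z)}g)+e_x$-adjustments; the freeness of $\C$ and Lemma~\ref{TwE} should make the telescoping work, and the single-orbit hypothesis is exactly what guarantees the correction term is well defined independent of the choice of factorization of $\iota_x$ through intermediate objects. Since $\pi_x(c_x)=0$, we get $\pi_x\circ s_x=\Id_k$, so $\pi$ splits. Checking naturality of $s$ is the computational heart here, and it reduces (by writing an arbitrary morphism as a composite of unfactorizables and using Lemma~\ref{VC}) to the case of an unfactorizable $\alpha$.

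For part (2), assume the free actions hypothesis and that $\pi$ splits, say $\pi\circ s=\Id$. Evaluate the splitting at each object: $s_x(1_k)=\lambda_x e_x + \xi_x$ with $\lambda_x\in k$, $\xi_x\in K(x)$, and $\pi_x(s_x(1_k))=\lambda_x=1$, so $\lambda_x=1$ for all $x$; thus $s_x(1_k)=e_x+\xi_x$. Naturality for $\alpha:x\to y$ gives $e_y+\sum_{w\in V(\alpha)}t_w(\alpha)+E(\alpha)(\xi_x)=e_y+\xi_y$, i.e. $\xi_y-E(\alpha)(\xi_x)=\sum_{w\in V(\alpha)}t_w(\alpha)$ inside $K(y)$. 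Now I would extract combinatorial consequences. First, take $x$ an object such that $\Homc(x',x)=\emptyset$ for all $x'\neq x$ (a ``source''-type minimal object in the $\Homc$-preorder, which exists by the ordering in Notation~\ref{SUBFE} read from the bottom): then $K(x)=0$, so $\xi_x=0$, and the naturality identity forces $\xi_y=\sum_{w\in V(\alpha)}t_w(\alpha)$ for every $\alpha:x\to y$. Comparing this formula for two different morphisms $\alpha,\alpha'':x\to y$ out of the \emph{same} such object $x$ forces strong constraints; and comparing across different minimal $x$'s must force there to be only one such $x$, which I claim is the desired smallest object $z$. The freeness (to control $V(\alpha)$ and $t_w(\alpha)$ via Lemma~\ref{VC}, Lemma~\ref{TwE}) together with the free-action hypothesis (so that the elements $t_w(\alpha)=\alpha''\circ\sum_{g\in\Ac(w)}g$ are ``as large as possible'' and linearly independent for inequivalent $\alpha''$) is what pins down that $\Homc(z,x)$ has a single $\Ac(z)$-orbit: if there were two orbits, the element $\xi_x$ would have to simultaneously equal two expressions coming from representatives of the two orbits, contradicting linear independence in $K(x)$.

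The main obstacle I anticipate is part (2): translating ``$\pi$ splits'' into ``smallest object with single-orbit Hom-sets'' requires carefully identifying which object must be the smallest one and then squeezing the orbit-count down to one. The key technical point will be that $\xi_y$ is forced by the identity $\xi_y=\sum_{w\in V(\alpha)}t_w(\alpha)$ (for $\alpha$ from a Hom-minimal object) to be a very specific element, and that this element's support in the basis $B'_y=\bigsqcup_{w\neq y}\Homc(w,y)$ — namely all of $\Homc(w,y)$ for the unfactorizable ``last step'' of $\alpha$, weighted by the group sum — can only be consistent across all choices of $\alpha$ and across all other Hom-minimal objects if there is a unique smallest object and each $\Homc(z,x)$ is one orbit. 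Establishing linear independence of the relevant $t_w(\alpha)$'s (which needs the free-action hypothesis) and bookkeeping the disjoint-union decomposition of $V(\beta\circ\alpha)$ is where the real work lies; part (1), by contrast, is a more routine naturality verification once the correction term $c_x$ is written down correctly.
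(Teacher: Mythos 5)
Your overall strategy coincides with the paper's, but part (1) as written contains a genuine gap. The concrete splitting you propose is built from averages of the form $\frac{1}{|\Ac(z)|}\,\iota_x\circ(\sum_{g\in\Ac(z)}g)$, and this division is simply unavailable when ${\rm char}\,k$ divides $|\Ac(z)|$ --- the proposition makes no assumption on the characteristic. Even away from that, the element is not the right one: the correction term must account for \emph{every} intermediate object, not only $z$. The clean choice, which you do not identify, is to set $s_z(1_k)=e_z$ and push it forward: $s_x(1_k)=E(\alpha_x)(e_z)=e_x+\sum_{w\in V(\alpha_x)}t_w(\alpha_x)$ for a fixed $\alpha_x\in\Homc(z,x)$. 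This has integer coefficients, and naturality then requires no telescoping over unfactorizable factors at all: $E(\alpha)(s_x(1_k))=E(\alpha\circ\alpha_x)(e_z)$ by functoriality of $E$, the single-orbit hypothesis gives $\alpha\circ\alpha_x=\alpha_y\circ g$ with $g\in\Ac(z)$, and $E(g)(e_z)=e_z$ because $V(g)=\emptyset$. Without this (or an equivalent characteristic-free construction), your part (1) does not go through.

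Part (2) is essentially the paper's argument in outline --- minimal objects $z$ have $K(z)=0$, hence $s_z(1_k)=e_z$; naturality forces $\xi_y=\sum_{w\in V(\alpha)}t_w(\alpha)$; the free action makes each $t_w(\alpha)$ a nonzero sum of distinct basis vectors so that supports in the disjoint pieces $\Homc(w,y)$ of $B'_y$ can be compared --- but you leave the decisive step unaddressed. ``Comparing across different minimal $x$'s'' requires that two \emph{distinct} minimal objects admit a common upper bound, and that is exactly where connectedness enters: one needs the elementary poset fact (Lemma~\ref{PO}) that a connected finite poset with no smallest element has two distinct minimal elements $z,z'$ below a common $x$. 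Granting that, the contradiction should be made explicit: $V(\alpha)=V(\beta)$ for $\alpha:z\to x$, $\beta:z'\to x$, and $z\in V(\alpha)$, so $\beta$ factors through $z$ and $\Homc(z',z)\neq\emptyset$, contradicting minimality. For the single-orbit claim you only need to compare the $z$-components: $t_z(\alpha)=t_z(\beta)$ plus the free action of $\Ac(z)$ gives $\alpha=\beta\circ h$ directly; no further linear-independence bookkeeping is required.
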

\begin{proof}
	For (1), let $z$ be the smallest object. We have that $\Homc(z,x)=\alpha_x\circ\Ac(z)$, where $\alpha_x\in \Homc(z,x)$ for each object $x$. 
We define a natural transformation $s:\underline{k}\rightarrow E$ as follows: for any $x\in \Obj\C$, $\underline{k}(x)=k \overset{s_x}{\longrightarrow} E(x)$ sends $1_k$ to $E(\alpha_x)(e_z)=e_x+\sum\limits_{w\in V(\alpha_x)} t_w(\alpha_x)$. We observe that $\pi \circ s={\Id}_{\underline{k}}$.
	
	 To prove that $s$ is a natural transformation, it suffices to prove that $s_y(1_k)=E(\alpha)(s_x(1_k))$ for any morphism $x \overset{\alpha}{\rightarrow} y$. We observe that $s_y(1_k)=E(\alpha_y)(e_z)$ and $E(\alpha)(s_x(1_k))=E(\alpha)(E(\alpha_x)(e_z))=E(\alpha\circ\alpha_x)(e_z)$. Since $\Homc(z,y)=\alpha_y\circ\Ac(z)$, we have that $\alpha\circ\alpha_x=\alpha_y\circ g$ for some $g\in \Ac(z)$. Then we have that $E(\alpha_y)(e_z)=E(\alpha\circ\alpha_x\circ g^{-1})(e_z)=E(\alpha\circ\alpha_x)(E(g^{-1})(e_z))=E(\alpha\circ\alpha_x)(e_z)$, since $E(g^{-1})(e_z)=e_z$ by the construction of $E$.
	 
	For (2), assume that $\pi$ splits. Then there is a natural transformation $s:\underline{k}\rightarrow E$ with $\pi \circ s={\Id}_{\underline{k}}$. It follows that $\underline{k}(x)=k \overset{s_x}{\longrightarrow} E(x)$ sends $1_k$ to $e_x+\sum\limits_{\{\gamma:w\rightarrow x\mid w\neq x\}} c_{\gamma}\gamma$, $c_{\gamma}\in k$ for each $x\in \Obj\C$. 
	
	We recall that $\Obj\C$ can be viewed as a poset, where $x\leq y$ if and only if $\Homc(x,y)\neq \emptyset$. We claim that $\C$ has a smallest object. Otherwise, there are two distinct minimal objects $z$ and $z'$ such that there is an object $x$ satisfying $\Homc(z,x)\neq \emptyset$ and $\Homc(z',x)\neq \emptyset$; see Lemma~\ref{PO}. Since $z$ and $z'$ are minimal, we have that $s_z(1_k)=e_z$ and $s_{z'}(1_k)=e_{z'}$. Let $\alpha\in \Homc(z,x)$ and $\beta\in \Homc(z',x)$. We have that $E(\alpha)(s_z(1_k))=s_x(1_k)=E(\beta)(s_{z'}(1_k))$, since $s$ is a natural transformation. We observe that $E(\alpha)(s_z(1_k))=E(\alpha)(e_z)=e_x+\sum\limits_{w\in V(\alpha)} t_w(\alpha)$, and $E(\beta)(s_{z'}(1_k))=E(\beta)(e_{z'})=e_x+\sum\limits_{w\in V(\beta)} t_w(\beta)$. Then we have $\sum\limits_{w\in V(\alpha)} t_w(\alpha)=\sum\limits_{w\in V(\beta)} t_w(\beta)$. Since $\Ac(w)$ acts freely on $\Homc(w,x)$, then $t_w(\alpha)\neq 0$ and $t_w(\beta)\neq 0$. Then we infer that $V(\alpha)=V(\beta)$ and $t_w(\alpha)=t_w(\beta)$ for each $w\in V(\alpha)=V(\beta)$. Then $z\in V(\alpha)=V(\beta)$, and thus $\Homc(z',z)\neq \emptyset$. This is a contradiction.
	
	Denote by $z$ the smallest element in $\C$. Let $\alpha$ and $\beta$ be two morphisms in $\Homc(z,x)$ for any object $x$. It suffices to prove that $\alpha$ and $\beta$ are in the same  $\Ac(z)$-orbit. Since $s$ is a natural transformation, we have that $E(\alpha)(s_z(1_k))=s_x(1_k)=E(\beta)(s_z(1_k))$. We observe that $E(\alpha)(s_z(1_k))=E(\alpha)(e_z)=e_x+\sum\limits_{w\in V(\alpha)} t_w(\alpha)$, and $E(\beta)(s_z(1_k))=E(\beta)(e_z)=e_x+\sum\limits_{w\in V(\beta)} t_w(\beta)$. Then we have that $t_z(\alpha)=t_z(\beta)$, that is, $\alpha\circ(\sum\limits_{g\in \Ac(z)} g)=\beta\circ(\sum\limits_{g\in \Ac(z)} g)$. 
	Since $\Ac(z)$ acts freely on $\Homc(z,x)$, we have $\alpha=\beta\circ h$ for some $h\in \Ac(z)$. Then we are done.
\end{proof}

The following lemma is well known. For completeness, we give a proof.
\begin{lem}\label{PO}
Let $(I,\leq)$ be a connected finite poset without a smallest element. Then there are two distinct minimal elements with a common upper bound.
\end{lem}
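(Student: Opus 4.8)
The plan is to argue by contradiction using a maximality argument on the poset $(I,\leq)$. Suppose for contradiction that every pair of distinct minimal elements fails to have a common upper bound. First I would note that since $I$ is finite, it has at least one minimal element; and if it had exactly one minimal element $m$, then every element of $I$ would lie above $m$ (any element dominates some minimal element, necessarily $m$), making $m$ a smallest element, contrary to hypothesis. So there are at least two distinct minimal elements.

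\begin{proof}
Suppose, for contradiction, that no two distinct minimal elements of $I$ admit a common upper bound. Since $I$ is finite and nonempty, it has a minimal element; and since $I$ has no smallest element, it has at least two distinct minimal elements. Fix a minimal element $m_0$, and let $J=\{x\in I\mid x\geq m_0\}$ be the set of elements lying above $m_0$. Note that $J\neq I$: otherwise $m_0$ would be a smallest element. Pick any $y\in I\setminus J$. Since $I$ is finite, there is a minimal element $m_1$ with $m_1\leq y$; then $m_1\notin J$, for if $m_1\geq m_0$, then minimality of $m_1$ would force $m_1=m_0$, whence $y\geq m_1=m_0$ and $y\in J$, a contradiction. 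Thus $m_0\neq m_1$ are distinct minimal elements. Now use connectedness: there is a sequence $m_0=z_0,z_1,\ldots,z_\ell=m_1$ in $I$ with $z_i$ and $z_{i+1}$ comparable for each $i$. Walking along this path, let $i$ be the first index with $z_i\in J$ but $z_{i+1}\notin J$ (such $i$ exists since $z_0=m_0\in J$ and $z_\ell=m_1\notin J$). Since $z_i$ and $z_{i+1}$ are comparable and $z_{i+1}\notin J$, we cannot have $z_{i+1}\geq z_i\geq m_0$; hence $z_{i+1}\leq z_i$. But then $m_0\leq z_i$, and also $z_{i+1}\leq z_i$; choosing a minimal element $m_2\leq z_{i+1}$, we get $m_2\leq z_i$ and $m_2\notin J$ (by the same argument as for $m_1$), so $m_2\neq m_0$, and $z_i$ is a common upper bound of the two distinct minimal elements $m_0$ and $m_2$. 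This contradicts our assumption, completing the proof.
\end{proof}

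The main obstacle is making the connectedness argument produce a genuine \emph{common upper bound} rather than merely a zig-zag of comparabilities: the subtlety is that a path in the comparability graph can go up and down, so I must locate a single vertex $z_i$ on the path that sits above both $m_0$ and some other minimal element. The key device is to track where the path first leaves the up-set $J$ of $m_0$; at that transition the comparability must point \emph{downward} (since the next vertex is not above $m_0$), and that downward-pointing vertex $z_i$ is simultaneously above $m_0$ and above a minimal element below $z_{i+1}$, which is the common upper bound we need. A clean alternative, if one prefers, is to induct on $|I|$ or to invoke that a connected finite poset with all minimal elements pairwise incomparable-with-no-common-bound would decompose into a disjoint union along its minimal elements, contradicting connectedness; but the path-tracking argument above is the most self-contained.
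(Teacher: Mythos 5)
Your proof is correct and rests on the same idea as the paper's: under the contradiction hypothesis, the up-set $J=\{x\in I\mid x\geq m_0\}$ of a minimal element cannot be left by a comparability step, which is exactly the paper's observation that the sets $S_z=\{a\geq z\}$ are disjoint and comparability-closed. The only difference is execution --- you track a single connecting path and locate the crossing vertex explicitly, whereas the paper phrases it as a decomposition of $I$ into connected components --- but this is the ``clean alternative'' you yourself mention, so the two arguments are essentially the same.
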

\begin{proof}
For each $z\in I$, set $S_z=\{a\in I\mid a\geq z\}$. Then $I=\bigcup\limits_{\{z\in I\mid z \  \text{is\ minimal}\}} S_z$.	Assume on the contrary that any two distinct minimal elements do not have a common upper bound. Then $I=\bigcup\limits_{\{z\in I\mid z \  \text{is\ minimal}\}} S_z$ is a disjoint union. Let $x\in S_z$ with $z$ minimal, and $y\in I$. We claim that if $x$ and $y$ are comparable, then $y\in S_z$. Indeed, if $x\leq y$, then $z\leq x\leq y$ and thus $y\in S_z$. If $y\leq x$, we assume that $y\in S_{z'}$ with $z'$ minimal. Then
 $z'\leq y\leq x$, and thus $x\in S_{z'}$. Hence $z'=z$.

The above claim implies that each $S_z$ is a connected component of $I$. This contradicts to the connectedness of $I$.
\end{proof}

\section{The Gorenstein-projective module $E$}
 In this section, we prove that $E$, viewed as a module over the category algebra, is Gorenstein-projective if in addition $\C$ is projective and prove Theorem~\ref{MR}. For the proof, we study the subfunctor $K$ of $E$. We observe that the projectivity of $K$ is equivalent to the freeness of the category $\C$; see Proposition~\ref{KERP}.
 
\subsection{Gorenstein-projective modules}
Let $A$ be a finite dimensional algebra over $k$. Denote by $A$-mod the category of finite dimensional left $A$-modules. The opposite algebra of $A$ is denoted by $A^{\rm op}$. We identify right $A$-modules with left $A^{\rm op}$-modules.

 Denote by $(-)^*$ the contravariant functor ${\rm Hom}_A(-,A)$ or ${\rm Hom}_{A^{\rm op}}(-,A)$. Let $X$ be an $A$-module. Then $X^*$ is a right $A$-module and $X^{**}$ is a $A$-module. There is an evaluation map ${\rm ev}_X: X\rightarrow X^{**}$ given by ${\rm ev}_X(x)(f)=f(x)$ for $x\in X$ and $f\in X^*$. Recall that an $A$-module $G$ is \emph{Gorenstein-projective} provided that ${\rm Ext}^i_A(G,A)=0={\rm Ext}^i_{A^{\rm op}}(G^*,A)$ for $i\geq 1$ and the evaluation map ${\rm ev}_G$ is bijective; see~\cite[Proposition 3.8]{AB}. 
 
 Denote by $A$-Gproj the full subcategory of $A$-mod consisting of Gorenstein-projective $A$-modules.
 Recall that $A$-Gproj is closed under extensions, that is, for a short exact sequence $0\rightarrow X\rightarrow Y\rightarrow Z\rightarrow 0$ of $A$-modules, $X,Z \in A$-Gproj implies $Y \in  A$-Gproj. It is well known that ${\rm Ext}^1_A(G,X)=0$ for any $A$-module $X$ with finite projective dimension and $G\in A$-Gproj; see~\cite[Section 10.2]{EJ}.

 In the literature, Gorenstein-projective modules are also called \emph{maximal Cohen-Macaulay} (MCM for short) modules. For an $A$-module $X$, an \emph{{\rm MCM}-approximation} of $X$ is a map $\theta: G\rightarrow X$ with $G$ Gorenstein-projective such that any map $G'\rightarrow X$ with $G'$ Gorenstein-projective factors through $\theta$. We observe that such an MCM-approximation is necessarily surjective.
 
 By a \emph{special {\rm MCM}-approximation}, we mean an epimorphism $\theta: G\rightarrow X$ with the kernel of $\theta$, denoted by ${\rm ker}\theta$, has finite projective dimension; compare~\cite[Definition 7.1.6]{EJ}. Since ${\rm Ext}^1_A(G',{\rm ker}\theta)=0$ for any Gorenstein-projective $G'$, we have that a special MCM-approximation is an MCM-approximation.

The algebra $A$ is \emph{Gorenstein} if ${\rm id}_A A<\infty$ and ${\rm id} A_A<\infty$, where ${\rm id}$ denotes the injective dimension of a module. It is well known that for a Gorenstein algebra $A$ we have ${\rm id}_A A={\rm id} A_A$; see~\cite[Lemma A]{Zaks}. Let $m\geq 0$. A Gorenstein algebra $A$ is \emph{$m$-Gorenstein} if ${\rm id}_A A={\rm id} A_A\leq m$. 

The following result is well known; see~\cite[Lemma 4.2]{XWC}.
\begin{lem}\label{1SGP}
Let $A$ be a $1$-Gorenstein algebra. Then an $A$-module $G$ is Gorenstein-projective if and only if there is a monomorphism $G\rightarrow P$ with $P$ projective.
\end{lem}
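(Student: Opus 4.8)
The plan is to prove both directions separately.

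For the "only if" direction, suppose $G$ is Gorenstein-projective. Take any projective cover (or any surjection from a projective) $P_0\to G^*$ in $A^{\rm op}\text{-mod}$; dualizing and using the evaluation isomorphism $G\cong G^{**}$, we obtain a monomorphism $G\cong G^{**}\to P_0^*$ (here I use that $(-)^*$ is left exact, so a surjection $P_0\twoheadrightarrow G^*$ becomes an injection $G^{**}\hookrightarrow P_0^*$, and that $P_0^*$ is projective because $P_0$ is). This gives the required monomorphism $G\to P$ with $P$ projective; note this direction does not even need the Gorenstein hypothesis, only the defining properties of Gorenstein-projectivity.

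For the "if" direction, suppose we have a monomorphism $G\to P$ with $P$ projective, so there is a short exact sequence $0\to G\to P\to C\to 0$ with $C=\coker$. Since $A$ is $1$-Gorenstein, every module has injective dimension at most $1$ over $A$ and over $A^{\rm op}$; equivalently, every first syzygy is already maximal Cohen-Macaulay, i.e., Gorenstein-projective. I would argue as follows: $C$ is an $A$-module, take a projective presentation $0\to G'\to Q\to C\to 0$ with $Q$ projective and $G'$ its syzygy. A standard dimension-shifting argument using $\operatorname{id}_A A\le 1$ shows $\Ext^i_A(C,A)=0$ for $i\ge 2$, and then from the sequence $0\to G\to P\to C\to 0$ we get $\Ext^i_A(G,A)\cong\Ext^{i+1}_A(C,A)=0$ for all $i\ge 1$. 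The symmetric statement for $G^*$ over $A^{\rm op}$ requires knowing $\Ext^i_{A^{\rm op}}(G^*,A)=0$ for $i\ge 1$; here I would apply $(-)^*$ to $0\to G\to P\to C\to 0$, observe that $\Ext^1_A(C,A)$ may be nonzero but is annihilated in high degrees, and run the same dimension shift with $\operatorname{id} A_A\le 1$. Finally, for bijectivity of $\operatorname{ev}_G$: apply $(-)^*$ twice to the sequence $0\to G\to P\to C\to 0$, use that $\operatorname{ev}_P$ is an isomorphism (true for any projective), and chase the resulting commutative diagram with the snake lemma, again invoking the vanishing of the relevant $\Ext$'s that $1$-Gorensteinness provides.

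The main obstacle is the last step — verifying that $\operatorname{ev}_G$ is bijective and that $G^*$ has no higher self-$\Ext$ into $A$ — since these are exactly the conditions that distinguish "Gorenstein-projective" from "first syzygy of something." The cleanest route is to cite the known characterization that over a $1$-Gorenstein algebra the Gorenstein-projective modules are precisely the submodules of projectives (which is essentially the content being claimed), so in practice I would simply reduce to the standard fact that over an $m$-Gorenstein algebra the $m$-th syzygies are Gorenstein-projective, specialized to $m=1$, for which a reference such as \cite[Section 11.5]{EJ} suffices; the forward direction above is then the only part needing an explicit argument.
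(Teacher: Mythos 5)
The paper offers no proof of this lemma at all: it is quoted as ``well known'' with a reference to \cite[Lemma 4.2]{XWC}, so there is no in-text argument to compare against. Your ``only if'' direction is complete and correct: surjecting a projective $P_0$ onto $G^*$ in $A^{\rm op}$-mod, applying the left exact functor $(-)^*$, and composing with the isomorphism ${\rm ev}_G\colon G\to G^{**}$ produces a monomorphism $G\to P_0^*$ into a projective, and, as you note, this uses only the definition of Gorenstein-projectivity.

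In the ``if'' direction, two points. First, the clause ``every module has injective dimension at most $1$ over $A$ and over $A^{\rm op}$'' is false: over a self-injective non-semisimple algebra (which is in particular $1$-Gorenstein) the simple modules have infinite injective dimension. What you actually use, namely ${\rm id}_A A\leq 1$ and hence $\Ext^i_A(-,A)=0$ for $i\geq 2$, is correct, so the dimension shift giving $\Ext^i_A(G,A)=0$ for $i\geq 1$ stands. Second, the sketched verifications of $\Ext^i_{A^{\rm op}}(G^*,A)=0$ and of the bijectivity of ${\rm ev}_G$ do not close as written: dualizing $0\to G\to P\to C\to 0$ only exhibits $G^*$ as an extension involving $\Ext^1_A(C,A)$, which is nonzero in general, and the proposed snake-lemma chase presupposes exactly the vanishing being proved. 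These two conditions are the genuinely hard content of the lemma, and the honest route is the one you fall back on: cite the standard theorem that over an $n$-Gorenstein algebra the Gorenstein-projective modules are precisely the $n$-th syzygies (equivalently, the modules $G$ with $\Ext^{i}_A(G,A)=0$ for $i\geq 1$), specialized to $n=1$, together with Schanuel's lemma and closure of Gorenstein-projectives under direct summands to pass from ``submodule of a projective'' to ``first syzygy''. With that citation the argument is acceptable, and it is in fact more informative than the paper's treatment, which consists of the citation alone.
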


Let $\C$ be a finite EI category which is skeletal. Recall from ~\cite[Definition 4.2]{WR} that
the category $\C$ is \emph{projective over $k$} if each
$k{\rm Aut}_{\C}(y)$-$k{\rm Aut}_{\C}(x)$-bimodule $k{\rm Hom}_{\C}(x,y)$ is projective on both sides. By ~\cite[Proposition 5.1]{WR} the category algebra $k\C$ is Gorenstein if and only if  $\C$ is projective over $k$, in which case, $k\C$ is $1$-Gorenstein if and only if the category $\C$ is free; see~\cite[Theorem 5.3]{WR}.

\subsection{Functors as modules}
Let us recall from ~\cite[III.2]{ARS} some notation on matrix algebras.
 Let $n\geq 2$. Recall that an $n\times n$ upper
 triangular matrix algebra $\Gamma=\left(
 \begin{array}{cccc}
 A_1 & M_{12} & \cdots & M_{1n} \\
 & A_2 & \cdots & M_{2n} \\
 &  & \ddots & \vdots \\
 &  &  & A_n \\
 \end{array}
 \right)$ is given by the following data: each $A_i$ is a finite dimensional algebra over $k$ , each $M_{ij}$ is an $A_i$-$A_j$-bimodule on which $k$ acts centrally for $1\leq i< j\leq n$. The multiplication of the matrix algebra is induced by $A_i$-$A_j$-bimodule morphisms $\psi_{ilj}: M_{il}\otimes_{R_l} M_{lj} \rightarrow M_{ij}$, which satisfy the following identities
  \[\psi_{ijt}(\psi_{ilj}(m_{il}\otimes m_{lj})\otimes m_{jt})=\psi_{ilt}(m_{il}\otimes \psi_{ljt}(m_{lj}\otimes m_{jt})),\]  for $1\leq i<l<j<t\leq n$.

  Recall that a left $\Gamma$-module $X=\lbr X_1 \\
 \vdots \\ X_n \\ \rbr$ is described by a column vector: each $X_i$ is a left $A_i$-module, the left $\Gamma$-module structure is induced by left $A_j$-module morphisms $\varphi_{jl}:
 M_{jl}\otimes_{R_l} X_l \rightarrow X_j$, which satisfy the following identities
 \[\varphi_{ij}\circ({\rm Id}_{M_{ij}}\otimes \varphi_{jl})=\varphi_{il}\circ(\psi_{ijl}\otimes {\rm Id}_{X_l}),\] for $1\leq i<j<l\leq n$.

Let $\C$ be a skeletal finite EI category with $\Obj\C=\{x_1,x_2,\cdots,x_n\}$ satisfying
 $\Homc(x_i,x_j)=\emptyset$ if $i<j$.
 Set $M_{ij}=k\Homc(x_j,x_i)$.
 Write $A_i=M_{ii}$, which is the group algebra of $\Ac(x_i)$. Recall that the category algebra $k\C$ is isomorphic to the corresponding
 upper triangular matrix algebra $\Gamma_{\C}=\left(
 \begin{array}{cccc}
 A_1 & M_{12} & \cdots & M_{1n} \\
 & A_2 & \cdots & M_{2n} \\
 &  & \ddots & \vdots \\
 &  &  & A_n \\
 \end{array}
 \right)$; see~\cite[Section 4]{WR}. The corresponding maps $\psi_{ilj}$ are induced by the composition of morphisms in $\C$.
 
  Then we have the following equivalences
 \[(k\text{-mod})^{\C}\overset{\sim}{\longrightarrow} k\C\text{-mod}\overset{\sim}{\longrightarrow}\Gamma_{\C}\text{-mod},\]
 which sends a functor $X:\C\rightarrow k$-mod to a $\Gamma_{\C}$-module $\lbr X_1\\ \vdots \\
 X_n\rbr$ as follows: $X_i=X(x_i)$, and $\varphi_{ij}:M_{ij}\otimes_{A_j} X_j\rightarrow X_i$ sends $\alpha\otimes a_j$ to $X(\alpha)(a_j)$ for $\alpha\in \Homc(x_j,x_i)$ and $a_j\in X_j$ for $1\leq i<j\leq n$. In what follows, we identify these three categories.
 
\begin{prop}\label{EGP}
	Let $\C$ be a finite free EI category and $E:\C\rightarrow k${\rm -mod} be the functor in Definition~\ref{TDOFE}. Assume that $\C$ is projective. Then $E$, viewed as a $k\C$-module, is Gorenstein-projective.
\end{prop}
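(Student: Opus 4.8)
The plan is to deduce the statement from three facts recalled above: since $\C$ is free and projective over $k$, the category algebra $k\C$ is $1$-Gorenstein by~\cite[Theorem 5.3]{WR}; hence by Lemma~\ref{1SGP} a $k\C$-module is Gorenstein-projective if and only if it embeds into a projective module; and $k\C$-Gproj is closed under extensions. Rather than embedding $E$ into a projective directly, I would use the finite filtration of subfunctors
\[0=Y^0\subseteq Y^1\subseteq Y^2\subseteq\cdots\subseteq Y^n=E\]
from Notation~\ref{SUBFE} (with the convention $Y^0=0$) and argue by induction on $t$ that $Y^t$ is Gorenstein-projective. By the closure of $k\C$-Gproj under extensions and the short exact sequences $0\rightarrow Y^{t-1}\rightarrow Y^t\rightarrow Y^t/Y^{t-1}\rightarrow 0$, it suffices to prove that each subquotient $Y^t/Y^{t-1}$ is Gorenstein-projective.

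So the heart of the argument is to identify $Y^t/Y^{t-1}$ and to embed it into a projective $k\C$-module. I expect that $Y^t/Y^{t-1}$ is the functor $Z^t$ that vanishes on $x_i$ for $i>t$, equals $k\Homc(x_t,x_i)$ on $x_i$ for $i<t$, and is one-dimensional on $x_t$, spanned by the class $\bar e$ of $e_{x_t}$, with $\Ac(x_t)$ acting trivially on $k\bar e$; morphisms act by composition on the $k\Homc(x_t,x_i)$-parts, and for $x_t\overset{\beta}{\rightarrow}x_i$ with $i<t$ one has $Z^t(\beta)(\bar e)=t_{x_t}(\beta)=\beta\circ\bigl(\sum_{g\in\Ac(x_t)}g\bigr)$. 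Checking this amounts to comparing the bases in Notation~\ref{SUBFE} and observing that, for $x_t\overset{\beta}{\rightarrow}x_i$, each $x_l\in V(\beta)$ with $l<t$ contributes a term $t_{x_l}(\beta)\in k\Homc(x_l,x_i)$ that lies in $Y^{t-1}(x_i)$, so that only the term $t_{x_t}(\beta)$ survives in the quotient; here one uses the definition of $t_w$ in Notation~\ref{V} (together with Lemma~\ref{TwE}), and the fact that $\beta$ being a non-isomorphism puts $x_t$ in $V(\beta)$. Then I would define a morphism of functors $\iota\colon Z^t\rightarrow k\Homc(x_t,-)$ by letting $\iota_{x_i}$ be the identity on $k\Homc(x_t,x_i)$ for $i<t$, letting $\iota_{x_i}=0$ for $i>t$, and sending $\bar e$ to the norm element $n_t:=\sum_{g\in\Ac(x_t)}g\in k\Ac(x_t)=k\Homc(x_t,x_t)$. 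Naturality reduces to the identities $\beta\circ n_t=t_{x_t}(\beta)$ for $\beta\colon x_t\to x_i$ and $h\circ n_t=n_t$ for $h\in\Ac(x_t)$; and $\iota$ is a monomorphism because each $\iota_{x_i}$ is injective, the norm element $n_t$ being nonzero. Since $k\Homc(x_t,-)$ is a projective $k\C$-module (it is the representable functor at $x_t$, equivalently the $t$-th column of the matrix algebra $\Gamma_\C$), Lemma~\ref{1SGP} gives that $Y^t/Y^{t-1}\cong Z^t$ is Gorenstein-projective.

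The induction then closes: $Y^1=Y^1/Y^0$ is Gorenstein-projective by the previous paragraph, and if $Y^{t-1}$ is Gorenstein-projective, the sequence $0\rightarrow Y^{t-1}\rightarrow Y^t\rightarrow Y^t/Y^{t-1}\rightarrow 0$ and extension-closedness give that $Y^t$ is Gorenstein-projective; taking $t=n$ yields that $E=Y^n$ is Gorenstein-projective. The main obstacle I anticipate is bookkeeping rather than conceptual: carefully establishing the description of $Y^t/Y^{t-1}$ — especially tracking exactly which terms $t_{x_l}(\beta)$ fall into $Y^{t-1}$, and the triviality of the $\Ac(x_t)$-action on the top — and verifying that $\iota$ is a well-defined natural transformation; once this is in place, the homological inputs ($1$-Gorensteinness, Lemma~\ref{1SGP}, and closure under extensions) apply directly.
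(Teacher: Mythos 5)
Your proof is correct and follows essentially the same route as the paper: the same filtration $0=Y^0\subseteq Y^1\subseteq\cdots\subseteq Y^n=E$, the same identification of the subquotients $Y^t/Y^{t-1}$ (with $\Ac(x_t)$ acting trivially on the class of $e_{x_t}$), and the same embedding into the projective module $k\Homc(x_t,-)$ via the norm element $\sum_{g\in\Ac(x_t)}g$ — this is exactly the $t$-th column $C_t$ of $\Gamma_{\C}$ used in the paper — followed by Lemma~\ref{1SGP} and closure of Gorenstein-projectives under extensions.
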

\begin{proof}
For each $1\leq t\leq n$, let $Y^t$ be the functor in Notation~\ref{SUBFE}. Then we have a filtration $0=Y^0\subseteq Y^1\subseteq \cdots \subseteq Y^{n-1}\subseteq Y^n=E$ of subfunctors. Since Gorenstein-projective modules are closed under extensions, it suffices to prove that each $Y^t/Y^{t-1}$ is a Gorenstein-projective $\Gamma_{\C}$-module for $1\leq t\leq n$. 

Recall from ~\cite[Theorem 5.3]{WR} that $\Gamma_{\C}$ is $1$-Gorenstein. 
We observe that $Y^1\simeq \lbr ke_{x_1}\\ 0 \\ \vdots \\0\rbr$, and $Y^t/Y^{t-1}\simeq \lbr M_{1t} \\ \vdots \\ M_{t-1,t} \\ke_{x_t}\\0\\ \vdots \\ 0\rbr$ as $\Gamma_{\C}$-modules, where the structure maps $\varphi_{ij}$ of $Y^t/Y^{t-1}$ are described as follows: $\varphi_{ij}=\psi_{ijt}$ if $i<j<t$, $\varphi_{it}(\alpha\otimes e_{x_t})=\alpha\circ(\sum\limits_{g\in \Ac(x_t)} g)$ for $\alpha\in \Homc(x_t,x_i)$, and $\varphi_{ij}=0$ if $j>t$.

Denote by $C_t$ the $t$-th column of $\Gamma_{\C}$. It is a projective $\Gamma_{\C}$-module. We observe that each $Y^t/Y^{t-1}$ is embedded in $C_t$, by sending $e_{x_t}$ to $\sum\limits_{g\in \Ac(x_t)} g$. By Lemma~\ref{1SGP}, each $Y^t/Y^{t-1}$ is Gorenstein-projective.
\end{proof}
 Let $\C$ be a skeletal finite EI category with $\Obj\C=\{x_1,x_2,\cdots,x_n\}$ satisfying
$\Homc(x_i,x_j)=\emptyset$ if $i<j$ and let $\Gamma_{\C}$ be the corresponding
upper triangular matrix algebra. 
For each $1\leq t\leq n$, denote by the $\Gamma_{\C}$-module
$i_t(R_t)^*=
\lbr M_{1t} \\
\vdots \\
M_{t-1,t} \\
0\\
\vdots\\
0 \rbr $, 
whose structure map is given by $\varphi_{ij}=\psi_{ijt}$ 
if $i<j<t$, and $\varphi_{ij}=0$ otherwise. Denote by $\Gamma_t$ the algebra given by the $t\times t$ leading
principal
submatrix of $\Gamma_{\C}$. Denote by $M_t^*$ the left
$\Gamma_t$-module $\lbr M_{1,t+1}\\
\vdots \\ M_{t,t+1}\\  \rbr$ for $1\leq t\leq n-1$. We use the same notation in ~\cite{WR}.

The following result is implicitly contained in~\cite[Proposition 4.5]{WR}.
\begin{prop}\label{KERP}
	Let $\C$ be a finite projective EI category and $K:\C\rightarrow k${\rm -mod} be the functor in Notation~\ref{KER}. Then $K$, viewed as a $k\C$-module, is projective if and only if the category $\C$ is free.
\end{prop}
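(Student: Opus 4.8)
The plan is to identify $K$, regarded as a $\Gamma_{\C}$-module, with a direct sum of the modules $i_t(R_t)^*$ recalled just before the statement, and then to read off the equivalence from the analysis of $\Gamma_{\C}$ carried out in \cite{WR}. First I would prove that $K\cong\bigoplus_{t=2}^{n}i_t(R_t)^*$ as $\Gamma_{\C}$-modules. Since $\Homc(x_l,x_i)=\emptyset$ whenever $l<i$, the basis $B'_{x_i}$ of $K(x_i)$ equals $\bigsqcup_{l=i+1}^{n}\Homc(x_l,x_i)$, so $K(x_i)=\bigoplus_{l=i+1}^{n}M_{il}$ as a $k$-space, where $M_{il}=k\Homc(x_l,x_i)$. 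The structure map $\varphi_{ij}\colon M_{ij}\otimes_{A_j}K(x_j)\to K(x_i)$ of the $\Gamma_{\C}$-module $K$ is post-composition, $\alpha\otimes\gamma\mapsto\alpha\circ\gamma$; restricted to the summand $M_{jl}$ of $K(x_j)$ (where necessarily $l>j$) it takes values in the summand $M_{il}$ of $K(x_i)$ and coincides with $\psi_{ijl}$. Hence every structure map of $K$ respects the grading of $K(x_i)=\bigoplus_{l}M_{il}$ by the ``source object'' $x_l$, and the $l$-th graded piece together with these maps is exactly $i_l(R_l)^*$ (and $i_1(R_1)^*=0$).

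Next I would reduce the projectivity of $K$ as follows. A direct summand of a projective module is projective and a finite direct sum of projective modules is projective, so $K$ is projective if and only if every $i_t(R_t)^*$ is. Moreover, since $i_t(R_t)^*$ is supported on the first $t-1$ rows, writing $\Gamma_{\C}$ as a $2\times 2$ upper triangular matrix algebra with leading principal block $\Gamma_{t-1}$ exhibits $i_t(R_t)^*$ as the extension by zero of the $\Gamma_{t-1}$-module $M_{t-1}^*$. The extension-by-zero functor is fully faithful and is left adjoint to the (exact) restriction functor onto the $\Gamma_{t-1}$-block, hence it preserves and reflects projectivity; therefore $i_t(R_t)^*$ is a projective $\Gamma_{\C}$-module if and only if $M_{t-1}^*$ is a projective $\Gamma_{t-1}$-module. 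Consequently $K$ is projective if and only if $M_s^*$ is a projective $\Gamma_s$-module for every $1\le s\le n-1$.

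Finally, under the standing hypothesis that $\C$ is projective over $k$, the requirement that all the $M_s^*$ (equivalently, all the $i_t(R_t)^*$) be projective is precisely the condition shown in \cite[Proposition 4.5]{WR} to be equivalent to the freeness of $\C$, which finishes the proof. I expect the only genuinely non-formal step to be the first one, namely matching the structure maps of $K$ with those of the modules $i_t(R_t)^*$; the reductions afterwards are standard properties of projective modules, and the homological content has already been established in \cite{WR}.
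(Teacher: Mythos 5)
Your proposal is correct and follows essentially the same route as the paper: identify $K$ with $\bigoplus_{t=2}^{n}i_t(R_t)^*$, reduce the projectivity of each $i_t(R_t)^*$ over $\Gamma_{\C}$ to that of $M_{t-1}^*$ over $\Gamma_{t-1}$, and conclude by \cite[Proposition 4.5]{WR}. The only difference is that the middle reduction is quoted in the paper from \cite[Lemma 2.1]{WR}, whereas you reprove it via the adjunction between extension by zero and restriction, which is a valid argument.
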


\begin{proof}
	We observe that the functor $K:\C\rightarrow k${\rm -mod} in Notation~\ref{KER} is isomorphic to $\bigoplus\limits_{t=2}^n i_t(R_t)^*$ as $\Gamma_{\C}$-modules.
	
We have the fact that $i_t(R_t)^*$ is a projective $\Gamma_{\C}$-module if and only if $M^*_{t-1}$ is a projective $\Gamma_{t-1}$-module for each $1\leq t\leq n$; see~\cite[Lemma 2.1]{WR}. By~\cite[Proposition 4.5]{WR}, the category $\C$ is free if and only if each $M^*_t$ is a projective $\Gamma_t$-module for $1\leq t\leq n-1$. Then we are done by the above observation.
\end{proof}

\subsection{The proof of Theorem 1.1}
We now are in a position to prove Theorem~\ref{MR}. Recall the map $E\overset{\pi}{\longrightarrow} \underline{k}$ in~(\ref{SES}). The constant functor $\underline{k}$ corresponds to the trivial module of $k\C$.

\begin{thm}\label{NM}
Let $\C$ be a finite free EI category. Assume that the category $\C$ is projective over $k$. Then the map $E\overset{\pi}{\longrightarrow} \underline{k}$ is an MCM-approximation of the trivial module $\underline{k}$.
\end{thm}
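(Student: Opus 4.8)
The plan is to exhibit $E\overset{\pi}{\longrightarrow}\underline{k}$ as a \emph{special} MCM-approximation, i.e.\ to show that $\pi$ is an epimorphism whose kernel $K$ has finite projective dimension, and that $E$ is Gorenstein-projective. Then, as recalled just before Lemma~\ref{1SGP}, a special MCM-approximation is automatically an MCM-approximation: any map $G'\to\underline{k}$ with $G'$ Gorenstein-projective lifts along $\pi$ because $\Ext^1_{k\C}(G',K)=0$ when $K$ has finite projective dimension.

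The three ingredients are already essentially in place. First, from the short exact sequence~(\ref{SES}), $\pi$ is surjective with kernel $K$. Second, since $\C$ is assumed projective over $k$, Proposition~\ref{KERP} tells us that $K$ is in fact a \emph{projective} $k\C$-module (here we use that $\C$ is free); in particular $\pi$ has projective kernel, so it is a special MCM-approximation as soon as we know $E$ is Gorenstein-projective. Third, $E$ is Gorenstein-projective by Proposition~\ref{EGP}, again using both the freeness and the projectivity of $\C$ (via the fact, from~\cite[Theorem 5.3]{WR}, that $\Gamma_\C$ is $1$-Gorenstein). So the proof is simply the assembly of these three facts.

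Concretely, I would argue as follows. By Proposition~\ref{EGP}, $E\in k\C\text{-Gproj}$. By Proposition~\ref{KERP}, $K$ is projective, hence of projective dimension zero; in particular $K$ has finite projective dimension. Therefore the exact sequence~(\ref{SES}) exhibits $\pi\colon E\to\underline{k}$ as a special MCM-approximation in the sense recalled above. Finally, for any Gorenstein-projective $k\C$-module $G'$ and any map $f\colon G'\to\underline{k}$, applying $\Hom_{k\C}(G',-)$ to~(\ref{SES}) and using $\Ext^1_{k\C}(G',K)=0$ (which holds since $K$ has finite projective dimension and $G'$ is Gorenstein-projective) shows that $\Hom_{k\C}(G',E)\to\Hom_{k\C}(G',\underline{k})$ is surjective, so $f$ factors through $\pi$. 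This is exactly the defining property of an MCM-approximation.

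There is no real obstacle remaining: the two substantive inputs (Gorenstein-projectivity of $E$ and projectivity of $K$) are isolated as Propositions~\ref{EGP} and~\ref{KERP}, and the only other thing used is the standard vanishing $\Ext^1_A(G',X)=0$ for $G'$ Gorenstein-projective and $X$ of finite projective dimension, quoted from~\cite[Section 10.2]{EJ}. If I wanted to avoid invoking ``special MCM-approximation'' as a named notion, I could inline the one-line $\Ext$ argument directly, but it is cleaner to cite the observation made in the text that every special MCM-approximation is an MCM-approximation. The only point to be careful about is to make sure both hypotheses on $\C$ (free \emph{and} projective over $k$) are genuinely invoked — freeness for Proposition~\ref{KERP} and for the $1$-Gorenstein property, projectivity over $k$ for the Gorenstein property of $k\C$ — which is already reflected in the statements of the cited propositions.
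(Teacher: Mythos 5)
Your proof is correct and follows exactly the paper's own argument: Proposition~\ref{EGP} gives that $E$ is Gorenstein-projective, Proposition~\ref{KERP} gives that $K$ is projective, and hence the sequence~(\ref{SES}) is a special MCM-approximation, which is an MCM-approximation by the $\Ext^1$-vanishing recalled in the text. Nothing is missing.
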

\begin{proof}
 By Proposition~\ref{EGP}, the $k\C$-module $E$ is Gorenstein-projective. By Proposition~\ref{KERP}, the $k\C$-module $K$ is projective. Hence the exact sequence ~(\ref{SES}) is a special MCM-approximation of $\underline{k}$.
\end{proof}

\begin{cor}\label{ST}
	Let $\C$ be a finite connected EI category, which is free and projective over $k$. Then we have the following statements.
		\begin{enumerate}
			\item If the category $\C$ has a smallest object $z$ such that $\Homc(z,x)$ has only one $\Ac(z)$-orbit for each object $x$, then the trivial module $\underline{k}$ is Gorenstein-projective.
			\item Assume that $\Ac(x)$ acts freely on $\Homc(x,y)$ for any objects $x$ and $y$. 
			If $\underline{k}$ is Gorenstein-projective, then the category $\C$ has a smallest object $z$ such that $\Homc(z,x)$ has only one $\Ac(z)$-orbit for each object $x$.
		\end{enumerate}
\end{cor}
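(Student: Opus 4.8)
The plan is to deduce both statements directly from Theorem~\ref{NM} together with Proposition~\ref{ST1}, using the standard fact that a module $X$ with a surjective MCM-approximation $\theta\colon G\to X$ is itself Gorenstein-projective if and only if $\theta$ is a split epimorphism. Indeed, if $X$ is Gorenstein-projective, then $\id_X\colon X\to X$ factors through $\theta$, giving a section; conversely, if $\theta$ splits then $X$ is a direct summand of the Gorenstein-projective module $G$, hence Gorenstein-projective since $A$-Gproj is closed under direct summands. I would state this as a one-line observation at the start of the proof (or cite it as well known), applied to the MCM-approximation $E\overset{\pi}{\longrightarrow}\underline{k}$ furnished by Theorem~\ref{NM}.

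For statement (1): assume $\C$ has a smallest object $z$ with $\Homc(z,x)$ a single $\Ac(z)$-orbit for every $x$. By Proposition~\ref{ST1}(1), the epimorphism $\pi$ in~(\ref{SES}) splits. By Theorem~\ref{NM}, $\pi$ is an MCM-approximation of $\underline{k}$ and $E$ is Gorenstein-projective, so by the observation above $\underline{k}$ is a direct summand of $E$, hence Gorenstein-projective.

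For statement (2): assume $\Ac(x)$ acts freely on $\Homc(x,y)$ for all $x,y$, and suppose $\underline{k}$ is Gorenstein-projective. Since $\pi\colon E\to\underline{k}$ is an MCM-approximation (Theorem~\ref{NM}), the identity map $\underline{k}\to\underline{k}$ factors through $\pi$, so $\pi$ splits. By Proposition~\ref{ST1}(2) — whose freeness hypothesis on the $\Ac(x)$-actions is exactly what we assumed — the category $\C$ has a smallest object $z$ such that $\Homc(z,x)$ has only one $\Ac(z)$-orbit for each object $x$, as required.

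There is essentially no obstacle here: the corollary is a formal consequence of the two cited results, and the only point needing care is the elementary splitting criterion for MCM-approximations, which I would either prove in two sentences or attribute. One should also note that the hypotheses of Corollary~\ref{ST} (connected, free, projective over $k$) are precisely those needed to invoke both Theorem~\ref{NM} and Proposition~\ref{ST1}, so no additional assumptions are introduced.
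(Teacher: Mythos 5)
Your proposal is correct and follows essentially the same route as the paper: the paper likewise observes that $\underline{k}$ is Gorenstein-projective if and only if the sequence~(\ref{SES}) splits (using closure of Gorenstein-projectives under direct summands for one direction), and then invokes Proposition~\ref{ST1}. Your explicit justification of the other direction via the factorization of $\Id_{\underline{k}}$ through the MCM-approximation $\pi$ is a fine way to make precise what the paper leaves implicit.
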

\begin{proof}
We observe that $\underline{k}$ is Gorenstein-projective if and only if the short exact sequence~(\ref{SES}) splits. Here, we use the fact that Gorenstein-projective modules are closed under direct summands. Consequently, the statement follows immediately from Proposition~\ref{ST1}.
\end{proof}

\section*{Acknowledgments}
The author is grateful to her supervisor Professor Xiao-Wu Chen for
his guidance. This work is supported by the National Natural
Science Foundation of China (No.s 11522113 and 11571329) and the Fundamental Research Funds for the Central Universities.


\begin{thebibliography}{10}
\bibitem{AB}
M. Auslander, M. Bridger, \emph{Stable module theory,} Memoirs of the American Mathematical Society, No. \textbf{94} (1969).

\bibitem{BuAs}
M. Auslander, R.-O. Buchweitz, \emph{The homological theory of maximal Cohen-Macaulay approximations,} M\'em. Soc. Math. France (N.S.) No. \textbf{38} (1989), 5--37. 

\bibitem{ARS}
M. Auslander, I. Reiten, S. O. Smal{\o}, \emph{Representation theory
	of Artin algebras,} Cambridge Studies in Advanced Mathematics,
\textbf{36}, Cambridge University Press, Cambridge, 1995.

\bibitem{XWC}
X.-W. Chen, \emph{The stable monomorphism category of a Frobenius category,} Math. Res. Lett. 18 (2011), no. \textbf{1}, 125--137.

\bibitem{EJ}
E. E. Enochs, O. M. G. Jenda, \emph{Relative homological algebra,} de Gruyter Expositions in Mathematics, \textbf{30}. 


\bibitem{LLi}
L. P. Li, \emph{A characterization of finite EI categories with
hereditary category algebras,} J. Algebra \textbf{345} (2011),
213--241.

\bibitem{LLi1}
L. P. Li, \emph{A generalized Koszul theory and its application,}
Trans. Amer. Math. Soc. \textbf{366} (2014), 931--977.

\bibitem{M}
B. Mitchell, \emph{Rings with several objects,}
Adv. Math. \textbf{8} (1972), 1--161.

\bibitem{WR}
R. Wang, \emph{Gorenstein triangular matrix rings and category algebras,}
J. Pure Appl. Algebra \textbf{220} (2016), no. 2, 666--682.

\bibitem{PWebb2}
P. Webb, \emph{An introduction to the representations and cohomology
of categories,} Group representation theory, 149--173, EPFL Press,
Lausanne, 2007.

\bibitem{XF1}
F. Xu, \emph{Hochschild and ordinary cohomology rings of small categories,} Adv. Math. 219 (2008), no. \textbf{6}, 1872--1893. 

\bibitem{XF2}
F. Xu, \emph{Tensor structure on $k\C$-mod and cohomology,}  Proc. Edinb. Math. Soc. (2) 56 (2013), no. \textbf{1}, 349--370.

\bibitem{Zaks}
A. Zaks, \emph{Injective dimensions of semi-primary rings,} J.
Algebra \textbf{13} (1969), 73--86.

\end{thebibliography}
\end{document}